\DeclareSymbolFont{bbold}{U}{bbold}{m}{n}
\DeclareSymbolFontAlphabet{\mathbbold}{bbold}
\newtheorem{theorem}{Theorem}[section]
\newtheorem{lemma}[theorem]{Lemma}
\newtheorem{definition}[theorem]{Definition}
\newtheorem{remark}{Remark}
\newtheorem{que}{Question}
\newtheorem{problem}{Problem}
\numberwithin{equation}{section}
\newcommand{\sm}{\left(\begin{smallmatrix}}
\newcommand{\esm}{\end{smallmatrix}\right)}
\newcommand{\mat}{\left(\begin{matrix}}
\newcommand{\emat}{\end{matrix}\right)}
\def\tr{\mathrm{Tr}}
\def\SL{\mathrm{SL}}
\def\Q{\mathbb Q} \def\R{\mathbb R} \def\Z{\mathbb Z} \def\C{\mathbb C}
\def\N{\mathbb N} \def\M{\mathbb M}  \def\H{\mathbb H}
 \def\T{\mathbb T} \def\P{\mathbb P}
\def\t{\tau}
\newcommand{\beq}{\begin{eqnarray*}}
\newcommand{\eeq}{\end{eqnarray*}}
\newcommand{\beqn}{\begin{eqnarray}}
\newcommand{\eeqn}{\end{eqnarray}}
\newcommand{\ben}{\begin{enumerate}}
\newcommand{\een}{\end{enumerate}}
\newcommand{\eps}{\epsilon}
\newcommand{\Mod}[1]{\ \mathrm{mod}\ #1}
\lstdefinelanguage{Sage}[]{Python}
{morekeywords={True,False,sage,singular},
sensitive=true}
\definecolor{dblackcolor}{rgb}{0.0,0.0,0.0}
\definecolor{dbluecolor}{rgb}{.01,.02,0.7}
\definecolor{dredcolor}{rgb}{0.8,0,0}
\definecolor{dgraycolor}{rgb}{0.30,0.3,0.30}
\DeclareMathOperator{\Tr}{Tr}
\newcommand{\F}{\mathbb{F}}
\renewcommand{\H}{\mathbb{H}}
\renewcommand{\hat}{\widehat}
\newcommand{\pfrac}[2]{\left(\frac{#1}{#2}\right)}
\newcommand{\tth}{^{\operatorname{th}}}
\DeclareMathOperator{\ord}{ord}
\DeclareMathOperator{\qord}{ordq}
\DeclareMathOperator{\Gal}{Gal}
\DeclareMathOperator{\Conj}{Conj}
\DeclareMathOperator{\lcm}{lcm}
\DeclareMathOperator{\mult}{mult}
\DeclarePairedDelimiter{\ceil}{\lceil}{\rceil}
\DeclarePairedDelimiter{\ordG}{|}{|}
\newcommand{\bbone}{\mathbbold{1}}
\renewcommand{\exp}[1]{\mathrm{exp}\left(#1\right)}
\renewcommand{\d}{d}
\newcommand{\num}{c}
\newcommand{\noin}{\noindent}
\newcommand{\bigpmod}[1]{\hspace{3ex}\left(\mathrm{mod }\; \;#1\right)}
\renewcommand{\N}{\Z_{>0}}
\begin{document}

\title{Moonshine for all finite groups}

\author{Samuel DeHority}
\address{Department of Mathematics, UNC-Chapel Hill, Chapel Hill, NC 27514}
\email{sdehorit@live.unc.edu}

\author{Xavier Gonzalez}
\address{Department of Mathematics, Harvard  University, Cambridge, MA 02138}
\email{haroldxaviergonzalez@college.harvard.edu}

\author{Neekon Vafa}
\address{Department of Mathematics, Harvard  University, Cambridge, MA 02138}
\email{nvafa@college.harvard.edu}

\author{Roger Van Peski}
\address{Department of Mathematics, Princeton University, Princeton, NJ 08544}
\email{rpeski@princeton.edu}

\begin{abstract}  In recent literature, moonshine has been explored for some groups beyond the Monster, for example the sporadic O'Nan and Thompson groups. This collection of examples may suggest that moonshine is a rare phenomenon, but a fundamental and largely unexplored question is how general the correspondence is between modular forms and finite groups. 
For every finite group $G$, we give constructions of infinitely many graded infinite-dimensional $\C[G]$-modules where the McKay-Thompson series for a conjugacy class $[g]$ is a weakly holomorphic modular function properly on $\Gamma_0(\ord(g))$. As there are only finitely many normalized Hauptmoduln, groups whose McKay-Thompson series are normalized Hauptmoduln are rare, but not as rare as one might naively expect. We give bounds on the powers of primes dividing the order of groups which have normalized Hauptmoduln of level $\ord(g)$ as the graded trace functions for any conjugacy class $[g]$, and completely classify the finite abelian groups with this property. In particular, these include $(\Z / 5 \Z)^5$ and $(\Z / 7 \Z)^4$, which are not subgroups of the Monster.
\end{abstract}

\thanks{\emph{Keywords:} Moonshine, Modular forms}

\thanks{The authors would like to thank Ken Ono and John Duncan for advising this project and for their many helpful conversations and suggestions. We also thank Hannah Larson for helpful conversations and edits, and Robert Wilson for answering our question about subgroups of the Monster. Finally, we thank Emory University, Princeton University, and the NSF (via grant DMS-1557690) for their support.
}

\thanks{2010
 Mathematics Subject Classification: 11F11, 11F30, 11F33, 20C05}

\maketitle

\section{Introduction and statement of results} \label{sec:intro}

\noin The theory of moonshine began with the single distinguished example of \emph{monstrous moonshine}. McKay and Thompson \cite{ ThoFinite, Tho} observed that the Fourier coefficients of the \emph{normalized elliptic modular invariant} 
\begin{equation*}
J(\tau) =  q^{-1} + 196884q + 21493760q^2+ \cdots  ~\text{where }  \tau \in \H,~ q := \exp{2\pi i \tau} 
\end{equation*}
are naturally given by nontrivial positive linear combinations of the 194 dimensions of irreducible representations of the monster group\footnote{At this stage the existence of $\M$ was still entirely conjectural.} $\M$. These early observations included
$$1 = 1, \hspace{1 cm} 196884 = 196883+1, \hspace{1 cm} 21493760 = 1 + 196883 + 21296876,$$
where the summands are the dimensions of the three smallest irreducible representations of $\M$. 
This observation led Thompson \cite{Tho} to conjecture the existence of an infinite-dimensional graded module, notated now as
\begin{equation}
V^\natural = \bigoplus_{n=-1}^\infty V^\natural_n
\end{equation}
in which the graded traces, now called \emph{McKay-Thompson series}, 
\begin{equation}
T_g(\tau) := \sum_{n=-1}^\infty \tr(g | V^\natural_n)q^n
\end{equation}
would be modular functions of interest strictly on level dividing $h \ord(g)$ for some $h | \gcd(12, \ord(g))$, and in particular in the case of the identity $T_e(\tau) = J(\tau)$. Here and throughout the paper, we say that a function is \emph{strictly} on level $N$ if it is invariant under $\Gamma_0(N)$ and not any proper divisor of $N$. Note that this requirement rules out the use of a trivial module made only from copies of the trivial representation, as the graded-trace functions on conjugacy classes of different orders give rise to different modular functions. Moreover, we note that $V^\natural$ has recently been discovered \cite{moonshine_survey} to be asymptotically equal to copies of the regular representation, meaning that $$ \lim_{n \to \infty} \frac{\mult_{i}(n)}{\sum_{j = 1}^{194} \mult_{j}(n)} = \frac{\dim{\chi_{i}}}{\sum_{j = 1}^{194} \dim{\chi_{j}}},$$
where $\mult_{i}(n)$ gives the number of copies of the $i \tth$ irreducible representation in $V^\natural_n$. In particular, the proportion of the trivial representation $\chi_1$ tends to 
$$ \lim_{n \to \infty} \frac{\mult_1(n)}{\sum_{j = 1}^{194} \mult_j(n) } = \frac{1}{5844076785304502808013602136}.$$

Building on Thompson's prediction, Conway-Norton \cite{Conway-Norton} conjectured the precise candidates for the functions $T_g$. For each $g \in \M$, there exists a group $\Gamma_g < \SL_2(\R)$ commensurable with $\SL_2(\Z)$, such that $\Gamma_g \backslash \H$ is genus zero and each McKay-Thompson series $T_g(\tau)$ is the \emph{normalized Hauptmodul} for $\Gamma_g$. The normalized Hauptmodul $T_g(\tau)$ is the unique function whose Fourier expansion begins $q^{-1}+O(q)$ and that generates the field of meromorphic functions on the compactification of $\Gamma_g \backslash \H$. Throughout this paper, when we write \emph{the} Hauptmodul for a given genus-zero group, we refer to the normalized Hauptmodul. These $\Gamma_g$ are the congruence subgroups $\Gamma_{0}(h\ord(g))$ usually extended by Atkin-Lehner and other involutions. Atkin-Fong-Smith \cite{AFS} showed the existence of a possibly virtual graded $\M$-module with the required properties, where by a \emph{virtual module} we mean a formal sum of irreducible representations with possibly negative coefficients. Frenkel-Lepowsky-Meurman \cite{FLM1, FLM2, FLM3} explicitly constructed the module, with additional algebraic properties, that they conjectured would satisfy Conway and Norton's prediction. Finally, in 1992, Borcherds \cite{Bor1, Bor2} showed that the construction did satisfy the full Conway-Norton conjecture. In a tour de force, Borcherds provided a refined algebraic description of the module from which he was able to relate the denominator formulas of the module to the replication formulas required of the Hauptmoduln (see Section \ref{ssc:replicability} for a discussion of replicability).

These results are naturally formulated in terms of quantum field theory. Here the $q$-expansion of $J(\tau)$ is the partition function for a physical system with monster symmetry whose states are given by $V^\natural$; see \cite{duncan_rademacher_sums_gravity} section 7 and the references cited therein. 
Recent work has expanded the horizon of moonshine, discovering modular forms arising as graded traces of infinite-dimensional modules of other finite groups. These finite groups include subquotients of the Monster \cite{carnahan_gen_moonshine_IV, Conway-Norton, mason, norton_generalized, queen} like $M_{24}$ \cite{EOT, muchadoaboutmatthieu}, which is the first example of \emph{umbral moonshine} \cite{CDH1, CDH2, umbral_moonshine}, a theory that relates the Niemeier lattices to vector-valued mock modular forms. Moonshine has even been extended to the O'Nan group \cite{onan} which is a pariah group, i.e. a sporadic simple group that is not a subquotient of $\M.$

The McKay-Thompson series of the moonshine modules for these finite groups are distinguished: for example, the mock modular forms arising in umbral moonshine have minimal principal parts, analogous to the requirement in monstrous moonshine that the graded-trace functions be Hauptmoduln. In view of these results, it is natural to ask about the ubiquity of moonshine-like phenomena among all finite groups if we allow for more flexibility for the McKay-Thompson series. 
In fact, building on the results of Zhu \cite{zhu}, Dong-Li-Mason (see Theorem 3 in \cite{DLM}) show that if a vertex operator algebra satisfies some properties, then the graded trace functions for any of its symmetry groups are ``generalized modular forms,'' essentially meaning that these functions and their transforms have $q$-expansions with fractional powers in $q$ and bounded denominators, and that there are finitely many transforms under certain modular groups up to scalar multiplication. However, in analogy with monstrous moonshine, we would like the graded trace functions to be precisely modular forms. This paper shows that between modular forms and the representation theory of finite groups, there are more prevalent relations than those coming from $\M$ and related groups. Specifically, we address two related but in some sense opposite problems:

\begin{problem}
Given a group $G$, does there exist a graded infinite-dimensional $\C[G]$-module $V^G$ whose graded traces are well-behaved modular functions strictly on level $\ord(g)$, similarly to the case of $V^\natural$? \footnote{We note that the levels of modular functions in $V^\natural$ are technically $h\ord(g)$ where $h|(12,\ord(g))$. Our proof of Theorem \ref{thm:moonshine_always} may be easily altered to handle other levels, so for simplicity we only prove it for the case when the graded trace of $g$ is strictly on level $\ord(g)$.}
\end{problem}

\begin{problem}
Given a collection of Hauptmoduln arising in monstrous moonshine, for which groups $G$ does there exist a $V^G$ with $\Tr(g | V^G)$ equal to the Hauptmodul for $\Gamma_{0}(\ord(g))$? (In general $\Gamma_0(\ord(g))$ need not be genus-zero, so this puts severe restrictions on the orders of group elements).
\end{problem}

For the first question, we will show that relaxing the requirement for the $T_g$ to be Hauptmoduln, a condition which is roughly equivalent to them satisfying the \emph{replicability} condition defined in Section \ref{ssc:replicability}, allows construction of infinite dimensional graded modules analogous to $V^\natural$ for every finite group. Further, without relaxing these requirements it is still possible to produce moonshine modules for groups which are not subgroups of the monster.

We say that a finite group $G$ has \textbf{moonshine of depth $\d \ge 1$} if the following hold:
\begin{enumerate}\label{def:depth}
    \item[(i)] There exists a graded infinite-dimensional $\C[G]$-module \begin{equation*}
        V^G= \bigoplus_{n \in \{-\d\} \cup \N} V_n^G,
    \end{equation*}
    \item[(ii)] The McKay-Thompson series $T_g(\tau)$ on $V^G$ is a weakly holomorphic modular function strictly on $\Gamma_0(\ord(g))$, and
    \item[(iii)] We have $T_e(\tau) = J(\tau) \mid dT(d)$, where $\mid dT(d)$ denotes the action of the normalized $d \tth$ Hecke operator as defined in \eqref{eq:hecke}.
\end{enumerate}
We refer to the smallest $\d$ for which such a module exists as the \textbf{depth} of $G$. The first main result is that every finite group has moonshine of some finite depth.

\begin{remark}
We note that in this paper we consider only the modular curves $\Gamma_{0}(N) \backslash \H$. We do not consider the modular curves generated by extending congruence subgroups with Atkin-Lehner involutions. Doing so would be a natural next step; see Question \ref{qu:atkin_lehner} in Section \ref{ssc:discussion}.
\end{remark}

\begin{theorem}\label{thm:moonshine_always}
Let $G$ be a finite group. For infinitely many positive integers $\d$, there exists an infinite-dimensional graded $\C[G]$-module $$V^G = \bigoplus_{n \in \{-\d\} \cup \N} V_n^G$$
such that for all $g \in G$, the McKay-Thompson series $\Tr(g | V^G)$ is a modular function strictly of level $\ord(g)$, and $\Tr(e | V^G) = J(\tau) \mid d T(d)$.
\end{theorem}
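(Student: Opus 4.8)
The plan is to build $V^G$ one conjugacy class at a time and then glue the pieces together using the orthogonality relations for characters of $G$. For a fixed conjugacy class $[g]$, the key observation is that we need a weakly holomorphic modular function $f_g$ strictly of level $\ord(g)$ whose $q$-expansion we can prescribe, and whose principal part we can control. The natural source of such functions is the space of weakly holomorphic modular functions on $\Gamma_0(N)$: for each $N$ this is infinite-dimensional, and one can produce functions with prescribed principal parts at the cusp $\infty$ (for instance by taking suitable polynomials in a Hauptmodul when the level is genus zero, or more generally by a Riemann-Roch / Mittag-Leffler argument on the modular curve $X_0(N)$, noting that weakly holomorphic means we allow arbitrary poles at the cusps). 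The subtlety flagged in the definition is the word \emph{strictly}: $f_g$ must be invariant under $\Gamma_0(\ord(g))$ but not under $\Gamma_0(M)$ for any proper divisor $M \mid \ord(g)$. This can be arranged by starting from any modular function on $\Gamma_0(\ord(g))$ and adding an oldform-type correction, or more simply by choosing the principal part to involve a term that genuinely obstructs invariance at a lower level; I would isolate this as a lemma (``for every $N$ there exist weakly holomorphic modular functions strictly of level $N$ with $q$-expansion $q^{-\d} + O(q)$ for infinitely many $\d$'').

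Next, given for each conjugacy class $[g]$ a function $f_{[g]}(\tau) = \sum_{n \geq -\d} a_n([g]) q^n$ of the above type — all sharing the same depth $\d$ and with $f_{[e]} = J \mid dT(d)$, which already has integral $q$-expansion $q^{-d} + O(q)$ (so one takes $\d = d$) — I would define the graded pieces $V_n^G$ by specifying their characters. Set $\chi_n(g) := a_n([g])$ and ask: is $\chi_n$ a genuine (virtual) character, i.e. a $\Z$-linear combination of irreducibles, and can the virtual parts be made honest? By the usual inner-product formula, the multiplicity of the irreducible $\chi_i$ in degree $n$ is $m_i(n) = \frac{1}{|G|}\sum_{g} a_n([g]) \overline{\chi_i(g)}$; this is automatically an algebraic integer, and one checks it is a rational integer provided the collection $\{f_{[g]}\}$ is Galois-stable in the appropriate sense (conjugate conjugacy classes get conjugate functions), which we are free to impose when we choose the $f_{[g]}$. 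Positivity of $m_i(n)$ for all $n$ is not automatic, but it can be forced: because we only require the identity trace to be the \emph{specified} function $J\mid dT(d)$ while the other $f_{[g]}$ are ours to choose subject only to level and principal-part constraints, we may add to each non-identity $f_{[g]}$ a large multiple of a fixed ``positive'' building block (e.g. a function whose contribution to every $m_i(n)$ is nonnegative and which grows fast enough), exploiting the fact that for $g \neq e$ these functions are unconstrained at high order. This is essentially the standard trick: asymptotically the coefficients of $J\mid dT(d)$ dominate, so the regular-representation heuristic already gives positivity for $n \gg 0$, and the finitely many remaining low-degree coefficients are corrected by hand.

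I expect the main obstacle to be exactly this positivity/integrality bookkeeping done \emph{uniformly}: one must choose the functions $f_{[g]}$, the depth $\d$, and the correction terms so that (a) all $f_{[g]}$ have the same principal-part order $\d$, (b) each is strictly of level $\ord(g)$ — the strictness must survive the corrections, so the correction block for class $[g]$ must itself be strictly of level $\ord(g)$ or at least not spoil strictness — (c) the resulting multiplicities are nonnegative integers in every degree, and (d) $f_{[e]} = J \mid dT(d)$ exactly, with no correction allowed on the identity class. Reconciling (d) with (c) is the crux: the identity-class function is fixed, so its coefficients determine the ``budget'' $\sum_i m_i(n)\dim\chi_i$, and one needs the off-identity coefficients to fit inside character-orthogonality constraints without violating positivity. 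The cleanest route is probably to prove a clean lemma that for each $N$ and each sufficiently large $\d$ there is a weakly-holomorphic modular function of level strictly $N$ with $q^{-\d}+O(q)$ expansion and \emph{arbitrarily prescribable} finitely many leading coefficients (subject to integrality), then invoke that the tail is controlled and run the orthogonality computation. The ``infinitely many $\d$'' then comes for free by iterating with $J\mid dT(d)$ for infinitely many $d$ (or by composing with Hecke operators), since each choice of $d$ yields a valid depth.
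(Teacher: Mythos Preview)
Your outline matches the paper's architecture---assign a modular function to each conjugacy class, compute multiplicities via Schur orthogonality, and deduce integrality from Galois-stability of the assignment---but the positivity step has a genuine gap. You propose to repair the finitely many negative $m_i(n)$ by adding to each non-identity $f_{[g]}$ a large multiple of a block ``whose contribution to every $m_i(n)$ is nonnegative.'' No such block exists: any correction that leaves $f_{[e]}$ fixed changes $m_i(n)$ by $\Delta m_i(n) = \tfrac{1}{\ordG{G}}\sum_{g\neq e} h_{[g]}(n)\,\overline{\chi_i(g)}$, and the class function $\sum_i (\Delta m_i(n))\chi_i$ then vanishes at $e$, giving $\sum_i (\Delta m_i(n))\dim\chi_i = 0$. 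If every $\Delta m_i(n)\geq 0$ then all are zero---so no modification confined to the non-identity traces can raise every multiplicity simultaneously, and your ``correct by hand'' cannot close the gap.

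The paper replaces this step with two concrete mechanisms. First, it takes $R_g = R_e - \ordG{G}\cdot \bar B_{\ord(g),t_g}$ with $\bar B_{m,t} = m^{12t}(\Delta(m\tau)/\Delta(\tau))^t$, and tunes the exponents $t_g$ so that all the $\bar B_{\ord(g),t_g}$ share the \emph{same} asymptotic $\tfrac{h^{1/4}}{\sqrt{2}\,n^{3/4}}\exp{4\pi\sqrt{hn}}$. For a nontrivial $\chi_i$ the $R_e$ contributions cancel by orthogonality, and because the surviving terms all grow at this common rate the sum collapses to $\dim(\chi_i)$ times that positive asymptotic, leaving only finitely many negative $m_i(n)$; for the trivial $\chi_i$ one chooses the depth of $R_e$ large enough to dominate. (The factor $\ordG{G}$ in front of $\bar B$ also makes integrality immediate, bypassing the Galois argument.) Second---and this is what actually eliminates the finitely many bad coefficients---the paper applies $pT(p)$ to \emph{every} $R_g$ for one sufficiently large prime $p$: letting $B=\lvert\inf_{n,i}m_i(n)\rvert$ and choosing $p$ beyond the point where all $m_i(n)>B$, the transformed multiplicity $p\,m_i(pn)+m_i(n/p)$ is bounded below by $pB-B\geq 0$ for every $n\geq 1$. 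The Hecke operator preserves integrality and strict level, and sends $R_e=J\mid \ell T(\ell)$ to $J\mid (\ell p)T(\ell p)$, so the identity condition survives and the infinitely many depths come along automatically. You mention Hecke operators only as a source of many $d$'s; in the paper they are precisely the device that makes positivity global.
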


\begin{remark}\label{rmk:tensor_power_of_V_natural}
The choice of graded trace functions given in the proof of Theorem \ref{thm:moonshine_always} were quite arbitrarily chosen, primarily for the ease of understanding the asymptotics of their coefficients. There are many other choices that could have been made. In fact, embed $G$ into $S_n$ by Cayley's theorem and consider $S_n$ acting on $(V^\natural)^{\otimes n}$ by permuting the components. Then the graded trace of an $n$-cycle is $T_g(\tau) = J(n\tau)$. But since the graded trace on a tensor product $T_{V\otimes W}(\tau)$ is $T_V(\tau)T_W(\tau)$, if $g$ has cycle type $(k_1, \ldots, k_d)$ then
$$T_g(\tau) = J(k_1\tau)J(k_2\tau)\cdots J(k_d\tau),$$ which is strictly level $\ord(g)$. While this module has the structure of a vertex operator algebra and we do not provide one for the construction in the proof of the theorem, we could have replaced $V^\natural$ in this construction with any graded vector space whose graded dimension is $f \in \mathbb{Z} (( q))$. This construction and the one in the proof show that there are many ways to provide graded $\C[G]$-modules with specified graded trace functions. Although the notion of depth used in this paper was defined for the case where $\Tr(e | V^G) =  J(\tau) \mid d T(d)$, one could alternatively define depth as the order of the pole at infinity, and then this construction would give a bound $d \leq \ordG{G}$.
\end{remark}

We note that the representation will be asymptotically regular if and only if the coefficients of modular function $\Tr(e | V^G)$ are asymptotically larger than coefficients of the graded trace functions associated to the other conjugacy classes. On the other hand, if $$ \lim_{n \to \infty} \frac{\mult_{1}(n)}{\sum_{j = 1}^{k} \mult_{j}(n)} = 1,$$ where $k = |\Conj(G)|$ and $\mult_{1}(n)$ stands for the multiplicity of the trivial representation in $V_n^G$, we call the representation asymptotically trivial.

\begin{remark}\label{rmk:repr_dists}
An inspection of the proof of Theorem \ref{thm:moonshine_always} in Section \ref{ssc:prf_1.1} shows that the modules constructed are asymptotically trivial. However, by relaxing the requirement that  $R_e = J(\tau) \mid d T(d)$ and increasing the order of the pole at infinity of $R_e$, we may also get a module that is asymptotically equal to the regular representation, as demonstrated in Section \ref{ssc:ex_s4} where $G = S_4$. A natural question is what other asymptotic tendencies are possible in this spectrum of moonshine modules.
\end{remark}
\begin{remark}
We note that we could have defined the depth of moonshine in item (iii) above differently to accommodate arbitrary principal parts instead of merely $q$-expansions of the form $q^{-d} + O(q)$. In fact, this modified notion of depth better accommodates the modules we construct that are asymptotic to the regular representation, as in Remark \ref{rmk:repr_dists}. 
\end{remark}
\noin Note that depth one moonshine includes monstrous moonshine if we allow the graded trace function for $g \in G$ to have level $h\ord(g)$ for $h | (12, \ord(g))$. The graded trace functions of monstrous moonshine, however, are always Hauptmoduln of genus zero groups. As there are only finitely many of these Hauptmoduln, groups whose graded trace functions are Hauptmoduln are rare.
In fact, given a set of Hauptmoduln $\{T_n\}$ with each $T_n$ of level $n$, it is possible determine whether a finite group $G$ has an infinite dimensional graded module with McKay-Thompson series $T_g = T_{\ord(g)}$, the Hauptmoduln on the genus-zero congruence subgroups $\Gamma_0(N) \leq \SL_2(\Z)$. It is well known that $\Gamma_0(N)$ is genus-zero if and only if
\begin{equation}\label{eq:genus_zero_N}
N \in \{1,2,3,4,5,6,7,8,9,10,12,13,16,18,25\}.
\end{equation}
We obtain necessary bounds on the orders of primes dividing $\ordG{G}$ by computationally checking congruences among the Hauptmoduln of the $15$ genus-zero congruence subgroups listed above, yielding the following result.

\begin{theorem}\label{thm:order_bounds}
Let $G$ be a finite group with a $\C[G]$-module
$$V^G = \bigoplus_{n \in \{-1\} \cup \N} V_n^G$$
for which $\Tr(g | V^G) =: T_{\ord(g)}$ is the Hauptmodul of $\Gamma_0(\ord(g))$ for all $g \in G$. Then $\ordG{G}$ is only divisible by primes $p$ with $(p-1)|24$, and furthermore $\ord_p(\ordG{G}) \leq \frac{24}{p-1}$ for $p$ odd and $\ord_2(\ordG{G}) \leq 25$. 
\end{theorem}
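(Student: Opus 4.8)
The argument splits into a restriction on which primes divide $\ordG{G}$ and a bound on their exponents.

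For the primes: if $g \in G$ then $T_{\ord(g)}$ is, by hypothesis, the Hauptmodul of $\Gamma_0(\ord(g))$, so $\Gamma_0(\ord(g))$ must be genus zero and $\ord(g)$ lies in the list \eqref{eq:genus_zero_N}. Hence $G$ has no element of order $p$ for any prime $p \notin \{2,3,5,7,13\}$, and by Cauchy's theorem no such prime divides $\ordG{G}$; since $\{2,3,5,7,13\}$ is precisely the set of primes $p$ with $(p-1)\mid 24$, this gives the first assertion. The same reasoning bounds the prime-power element orders: a Sylow $p$-subgroup $P \le G$ has exponent dividing the largest power of $p$ appearing in \eqref{eq:genus_zero_N}, namely $16, 9, 25, 7, 13$ for $p = 2, 3, 5, 7, 13$ respectively.

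For the exponent bounds, note that $\ord_p(\ordG{G}) = \ord_p(|P|)$ and that $V^G$ restricts to a graded $\C[P]$-module with $\Tr(h \mid \mathrm{Res}_P V^G) = T_{\ord(h)}$, and similarly for every subgroup $H \le P$. For such an $H$, each $n$, and writing $a_m(n)$ for the $n \tth$ Fourier coefficient of the Hauptmodul of $\Gamma_0(m)$, the multiplicity of the trivial representation in $\mathrm{Res}_H V^G_n$ equals $\frac{1}{|H|}\sum_{h \in H} a_{\ord(h)}(n) \in \Z_{\ge 0}$, and every irreducible multiplicity is likewise a non-negative integer. Specializing $H$ — to cyclic subgroups, to elementary abelian subgroups, and to $P$ itself — and expanding these multiplicities in the $a_m(n)$ produces a system of divisibility relations among the coefficients of the fifteen genus-zero Hauptmoduln. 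In the simplest case $\exp(P) = p$, the trivial-representation multiplicity over $H = P$ already shows that $|P|$ divides $a_1(n) - a_p(n)$ for every $n$; picking $n_0$ with $a_1(n_0) \ne a_p(n_0)$ gives $\ord_p(|P|) \le \ord_p\!\big(a_1(n_0) - a_p(n_0)\big)$, and one verifies computationally that some such $n_0$ makes this at most $\frac{24}{p-1}$. This already settles $p = 7$ and $p = 13$; the remaining primes need, in addition, the analogous relations involving $a_{p^2}$ (for $p = 3, 5$, whose Sylow subgroups can contain elements of order $p^2$) and $a_4, a_8, a_{16}$ (for $p = 2$), with the computational input being that the $p$-adic valuations of the relevant integer combinations of Fourier coefficients are bounded by $\frac{24}{p-1}$ for odd $p$ and by $25$ for $p = 2$.

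The main obstacle is doing this uniformly over all finite groups rather than only abelian ones: for a non-abelian Sylow $p$-subgroup the trivial- and linear-character multiplicities over abelian subgroups need not by themselves pin down $|P|$, so one must squeeze out enough further constraints from the higher-dimensional characters of $P$ and from restriction to proper subgroups of various orders. The $2$-adic case is the most delicate, and it is exactly the richer $2$-power structure in \eqref{eq:genus_zero_N} — the odd primes contribute at most the one extra power $p^2$ (at $p = 3, 5$), whereas $2$ contributes $4$, $8$, and $16$ — that accounts for the bound $\ord_2(\ordG{G}) \le 25$ being one larger than $24 = \frac{24}{2-1}$. A minor technical point: only finitely many coefficients $a_m(n)$ need to be inspected, since for the upper bounds it suffices to exhibit a single $n_0$ realizing the claimed valuation in each relevant configuration.
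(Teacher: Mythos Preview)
Your first paragraph and the reduction to the Sylow $p$-subgroup $P$ are correct and match the paper. The trivial-character multiplicity over $P$ indeed forces
\[
|P| \ \Big|\ a_1(n) + \sum_{j \ge 1} \ell_j\, a_{p^j}(n) \qquad \text{for all } n,
\]
where $\ell_j$ counts elements of order $p^j$ in $P$, and this is the right starting point.

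The gap is in how you handle the dependence of the $\ell_j$ on $P$. You worry that for a non-abelian $P$ the tuple $(\ell_1,\ell_2,\ldots)$ is hard to control, and you propose to compensate by restricting to various subgroups and invoking higher-dimensional characters. This is where the argument stalls: you never say how those extra constraints would close the problem, and in fact they are not needed. The paper's key simplification is to notice that the trivial-character congruence always has coefficient $1$ on $T_1$, so it is enough to show that for $N$ exceeding the claimed bound, \emph{no} relation
\[
T_1 + c_2 T_2 + \cdots + c_{25} T_{25} \equiv 0 \pmod{p^N}
\]
holds for \emph{any} integers $c_i$ whatsoever --- not just those arising as element-counts in some group. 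This relaxation turns the question into pure linear algebra over $\Z/p^N\Z$: one computes the left kernel of the matrix of Fourier coefficients of $T_1,\ldots,T_{25}$ and checks that no kernel vector has unit first coordinate. The computation is entirely insensitive to whether $P$ is abelian, so the ``main obstacle'' you describe evaporates; the trivial character alone, together with this linear-algebraic relaxation, gives the uniform bound in one stroke. (Your closing explanation for why the $2$-adic bound is $25$ rather than $24$ is also speculative --- the paper simply reports what the kernel computation yields, and remarks that the sharper bound $24$ may well hold.)
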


\noin For abelian groups, we obtain an explicit description of the generating function of the multiplicity of an irreducible representation in each homogeneous component of $V^G$ in Lemmas \ref{lem:raw_congs} and \ref{lem:nice_congs}. These are of independent interest and also yield concrete conditions on when such multiplicities are positive integers, from which we obtain a complete classification of all abelian groups having moonshine in the sense of Theorem \ref{thm:order_bounds}.

\begin{restatable}{theorem}{abelianmoonshine}\label{thm:abelian_moonshine}
Let $G$ be a finite abelian group. Then there exists a $\C[G]$-module $V^G = \bigoplus_{n \in \{-1\} \cup \N} V_n^G$ for which $\Tr(g | V^G) = T_{\ord(g)}$ is the Hauptmodul of $\Gamma_0(\ord(g))$ if and only if $G$ is isomorphic to one listed in Table \ref{table:abelian_groups}. Furthermore $V_n^G$ is asymptotically regular.
\end{restatable}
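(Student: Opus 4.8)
The plan is to use that, $G$ being abelian, every irreducible $\C[G]$-module is one-dimensional, so a $\C[G]$-module $V^G=\bigoplus_{n\in\{-1\}\cup\N}V_n^G$ is determined up to isomorphism by the multiplicities $m_\chi(n)$ of each character $\chi$ of $G$ in $V_n^G$. The prescription $\Tr(g\mid V^G)=T_{\ord(g)}$ fixes the graded multiplicity series $R_\chi(\tau):=\sum_n m_\chi(n)q^n$ by Fourier inversion on $G$:
\begin{equation*}
R_\chi(\tau)\;=\;\frac{1}{|G|}\sum_{g\in G}\overline{\chi(g)}\,T_{\ord(g)}(\tau).
\end{equation*}
Hence a module with these traces exists precisely when every $m_\chi(n)$ with $n\ge1$ is a non-negative integer (the coefficient $m_\chi(-1)=\delta_{\chi,\mathbf{1}}$ is forced, and gives $V_{-1}^G\cong\mathbf{1}$). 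Writing $t_N(n):=[q^n]T_N$ and sorting group elements by order, one obtains
\begin{equation*}
m_\chi(n)\;=\;\frac{1}{|G|}\sum_{d\mid\mathrm{exp}(G)}c_d(\chi)\,t_d(n),\qquad c_d(\chi):=\sum_{\ord(g)=d}\overline{\chi(g)},
\end{equation*}
and the generalized Ramanujan sums $c_d(\chi)$ factor over the primary decomposition $G=\prod_pG_p$ into prime-power Ramanujan sums, completely explicit in terms of the elementary divisors of $G_p$ and the $p$-part of $\chi$; this is the content of Lemmas \ref{lem:raw_congs} and \ref{lem:nice_congs}. In particular each $c_d(\chi)\in\Z$, $c_1(\chi)=1$, and $\sum_d c_d(\chi)=|G|\,\delta_{\chi,\mathbf{1}}$, so integrality of $m_\chi(n)$ is exactly the divisibility $|G|\mid\sum_d c_d(\chi)t_d(n)$.

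First I would reduce to a finite list of candidates. Since the Hauptmodul $T_{\ord(g)}$ exists only when $\Gamma_0(\ord(g))$ has genus zero, every element order — hence, $G$ being abelian, every divisor of $\mathrm{exp}(G)$, hence $\mathrm{exp}(G)$ itself — must lie in the set \eqref{eq:genus_zero_N}, which confines $\mathrm{exp}(G)$ to those $15$ values and bounds $\mathrm{exp}(G_p)$ for each $p$. Theorem \ref{thm:order_bounds} then bounds $\ord_p(|G|)$, leaving only finitely many $G$. I would sharpen this by testing the divisibility $|G|\mid\sum_d c_d(\chi)t_d(n)$ at small $n$ against well-chosen $\chi$: for instance when $G_p=(\Z/p\Z)^c$ one has $c_1(\chi)=1$ and $c_p(\chi)=-1$ for $\chi$ nontrivial on $G_p$, so $p^c$ must divide $t_1(n)-t_p(n)=[q^n](J-T_p)$ for all $n$, and the $p$-adic valuations of the first few coefficients of $J-T_p$ (e.g. $[q^1](J-T_2)=2^{16}\cdot3$) already pin $c$ down tightly. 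Carrying this out prime by prime and elementary-divisor-type by elementary-divisor-type cuts the candidates down to a short explicit list containing the groups of Table \ref{table:abelian_groups}.

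For each surviving candidate one must then verify \emph{both} conditions for all $n$. For integrality, note that $m_\chi(n)\in\Z$ for all $n\ge1$ is equivalent to $|G|$ dividing the $q^n$-coefficient of the fixed series $F_\chi(\tau):=\sum_{d\mid\mathrm{exp}(G),\,d>1}c_d(\chi)\bigl(T_d(\tau)-J(\tau)\bigr)$ (using $c_1(\chi)=1$ to eliminate the $T_1$-term, together with $t_1(n)\in\Z$ and $\sum_d c_d(\chi)\in\{0,|G|\}$). This $F_\chi$ is a weakly holomorphic modular function of weight $0$ on $\Gamma_0(\mathrm{exp}(G))$ with integral $q$-expansion, holomorphic and vanishing at $\infty$, and with poles only at the remaining cusps; multiplying by a normalized integral $\eta$-quotient of some weight $k$ that vanishes to sufficient order at those cusps produces a holomorphic modular form on $\Gamma_0(\mathrm{exp}(G))$, so by a Sturm-type bound the congruence $|G|\mid[q^n]F_\chi$ for all $n$ reduces to a finite check. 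For non-negativity, Rademacher/circle-method asymptotics give that the coefficients of $J$ strictly dominate those of every $T_N$ with $N>1$ (the pole of $J$ sits over every cusp of $\Gamma_0(N)$ whereas that of $T_N$ sits only over $\infty$, forcing a strictly larger exponential growth rate), so $m_\chi(n)\sim\tfrac1{|G|}[q^n]J$ for every $\chi$. This simultaneously gives the asymptotic-regularity assertion (since $\dim V_n^G=[q^n]J$) and shows $m_\chi(n)>0$ beyond an effectively computable $n_0$; the remaining range $1\le n\le n_0$ is checked directly, and it is there that some integral candidates are eliminated — e.g. $m_{\mathbf{1}}(2)=t_2(2)+\tfrac1{2^{16}}(t_1(2)-t_2(2))=-2048+328<0$ rules out $(\Z/2\Z)^{16}$.

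The main obstacle is the third step's reduction: turning the infinitely many divisibility conditions $|G|\mid\sum_d c_d(\chi)t_d(n)$ into a finite verifiable set. This rests on (i) the modularity that makes Sturm's bound applicable after clearing poles, (ii) a careful accounting of the generalized Ramanujan sums across the five admissible primes and all elementary-divisor types, and (iii) effective — not merely asymptotic — tail bounds in the non-negativity step so that ``for all $n$'' genuinely reduces to an explicit finite range. The non-negativity analysis is the subtler half, since the leading term $\tfrac1{|G|}[q^n]J$ is common to every $\chi$, while it is the subleading contributions, governed by $T_2$ (and $T_3$, \ldots) with their negative coefficients, that force the cutoffs making Table \ref{table:abelian_groups}, rather than the weaker bounds of Theorem \ref{thm:order_bounds}, the exact answer.
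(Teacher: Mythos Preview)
Your proposal is correct and follows essentially the same approach as the paper: reduce to integrality plus nonnegativity of the multiplicity generating functions via Schur orthogonality, handle integrality by the Sturm-type bound of Lemma~\ref{lem:sturm} together with the explicit congruence reformulation of Lemmas~\ref{lem:raw_congs}--\ref{lem:nice_congs}, and handle nonnegativity by effective Rademacher-type estimates showing the $J$-term dominates. The paper carries out the nonnegativity step with explicit Kloosterman and Bessel bounds (inequalities \eqref{eq:j_lower}--\eqref{eq:bound_no_al}) to get a uniform cutoff $N=100$, and likewise observes that positivity imposes exactly one extra constraint beyond integrality (the $a+b+c+d\le 13$ condition for the $2$-groups), which matches your $(\Z/2\Z)^{16}$ example.
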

\begin{table}[H] \label{ab_table}
\begin{center}
  \begin{tabular}{| c  | r| }
  \hline
    Group & Conditions on exponents \\
    \hline
    \multirow{5}{*}{$(\Z/2\Z)^a \times (\Z/4\Z)^b \times (\Z/8\Z)^c \times (\Z/16\Z)^d$} & $a+2b+3c+4d \leq 16$ \\
     & $b+2c+3d \leq 8 $ \\
     & $c+2d \leq 4 $\\
     & $d \leq 2 $ \\
     & $a+b+c+d \leq 13$ \\
    \hline
    \multirow{2}{*}{$(\Z / 2\Z)^a \times (\Z / 3\Z)^b$} & $1 \leq a \leq 4$ \\
    & $1 \leq b \leq 3$ \\
    \hline
    \multirow{3}{*}{$(\Z / 2\Z)^a \times (\Z / 4\Z)^b \times (\Z / 3\Z)^c$} & $a + 2b \leq 4$ \\
    & $1 \leq b \leq 2$ \\
    & $1 \leq c \leq 2$ \\
    \hline
    \multirow{1}{*}{$(\Z / 2\Z) \times (\Z / 3\Z)^a \times (\Z / 9\Z)$} & $a \leq 1$ \\
    \hline
    \multirow{2}{*}{$(\Z / 3\Z)^a \times (\Z / 9 \Z)^b$} & $a + 2b \leq 9$\\
    & $b \leq 3$ \\
    \hline 
    \multirow{2}{*}{$(\Z / 2\Z)^a \times (\Z / 5\Z)^b$} & $1 \leq a \leq 3$ \\
    & $1 \leq b \leq 2$ \\
    \hline
    \multirow{2}{*}{$(\Z / 5\Z)^a \times (\Z / 25 \Z)^b$} & $ a + 2b \leq 5$ \\
    & $b \leq 1$\\
    \hline
    $(\Z / 7\Z)^a$ & $a \leq 4$\\
    \hline
    $(\Z / 13\Z)^a$ & $a \leq 2$ \\
    \hline
  \end{tabular}
  \vspace{2ex}
  \caption{\label{table:abelian_groups} Abelian groups with modules where the $T_g$ are Hauptmoduln. $a,b,c,d\ge 0$.}
\end{center}
\end{table}

\begin{remark}\label{rmk:not_subgroups}
For those groups in Table \ref{ab_table} which are subgroups of $\M$, the existence of the module $V^G$ is immediate by simply restricting $V^\natural$. We note that some of these groups, for example $(\Z/7\Z)^4$ and $(\Z/5\Z)^5$, are not subgroups of $\M$ (\cite{wilson_communication}; see also Theorems 5 and 7 of \cite{monster_subgroup}) but still have an associated moonshine module. It is interesting to note, however, that$(\Z/5\Z)^5$ is a subgroup of the Weyl group $W(A_4^6)$ and $(\Z/7\Z)^4$ is a subgroup of Weyl group $W(A_6^4)$, which plays a role in umbral moonshine.
\end{remark}
\begin{remark}
One can extend Theorem \ref{thm:abelian_moonshine} to show that if we choose a finite set $A \subset \N$ where for all $a \in A$, $f_a$ is a weakly holomorphic modular function strictly of level $a$ where the $f_a$ are linearly independent, then there exist finitely many finite abelian groups $G$ having a graded $\C[G]$-module where the graded trace function for $g$ is $f_{\ord(g)}$.
\end{remark}

\noin Having introduced our main results, we now provide an outline of the paper and a description of our methods. Section $2$ gives an overview of background material on modular forms and asymptotics of their Fourier coefficients. Section $3$ shows that virtual moonshine is possible for any finite group because there is an ample supply for modular forms which can be constructed to satisfy certain congruences that guarantee integral multiplicities. These congruences come from an application of the Schur orthogonalilty relations to the group's irreducible characters and the proposed McKay-Thompson series. Furthermore, we show in Section $4$ that the multiplicities can in fact be chosen to be positive, proving Theorem \ref{thm:moonshine_always}. This gives an answer to Problem $1$, while Theorems \ref{thm:order_bounds} and \ref{thm:abelian_moonshine} answer an interesting case of Problem $2$. Building on the virtual moonshine shown in Section $3$, we construct explicit functions which give rise to moonshine modules for every finite group (Theorem \ref{thm:moonshine_always}). We then prove Theorems \ref{thm:order_bounds} and \ref{thm:abelian_moonshine} by reducing to a computer-check of congruences among Hauptmodul Fourier coefficients, as well as calling upon asymptotics to show nonnegativity in Theorem \ref{thm:abelian_moonshine}. Section $5$ provides two worked examples. First we construct a moonshine module for $(\Z / 7\Z)^4$ where all the graded-trace functions are Hauptmoduln. This demonstrates our proof of Theorem \ref{thm:abelian_moonshine} by showing how the congruence and nonnegativity conditions are determined and satisfied. Then we construct a moonshine module of depth $791$ for $S_4$ to demonstrate the proof of Theorem \ref{thm:moonshine_always} by constructing graded-trace functions for a module with only finitely many virtual components and then applying Hecke operators to these initial graded-trace functions to get new graded-trace functions that ensure nonnegativity of multiplicity. In this example, as in Remark \ref{rmk:repr_dists}, we convert the resulting asymptotically trivial representation to an asymptotically regular one.

\section{Background}\label{sec:nuts_bolts}

\noin In Section \ref{ssc:mod_form}, we give necessary background for modular forms. In Section \ref{ssc:asymp}, we give exact formulas for coefficients and asymptotics for specific modular forms. We will use these asymptotics to show the nonnegativity of the multiplicities of the irreducible representations in the moonshine modules  we construct in Section \ref{sec:pf_thms}. In Section \ref{ssc:replicability}, we describe the notion of replicability, which helps to illuminate what makes monstrous moonshine so special.

\subsection{Modular Forms and Functions}\label{ssc:mod_form}

We recall the basic theory of modular forms, as detailed e.g. in \cite{webofmodularity}. The group $\SL_2(\Z)$ acts on the upper half-plane $\H$ by the map
$ A \t \mapsto \frac{a \t + b}{c \t + d}$, and a \emph{cusp} of a subgroup $\Gamma \leq \SL_2(\Z)$ is an equivalence class of $\Q \P^1 = \Q \cup \{\infty\}$ under the action of $\Gamma$.

\begin{definition}\label{def:mod_form}
A function $f$ is a \textbf{holomorphic modular form of integral weight $k$} $\geq 0$ on a subgroup $\Gamma \leq \SL_2(\Z)$ if it satisfies the following conditions:
\begin{itemize}
    \item[(i)] $f$ is holomorphic on $\H$,
    \item[(ii)] We have
    $$f(A \t) = (c\t + d)^kf(\t)$$
    for all $A = \begin{pmatrix}
a & b\\
c & d
\end{pmatrix} \in \Gamma, \t \in \H$, and
    \item[(iii)] $f$ is bounded as $\t$ approaches all the cusps of $\Gamma$.
\end{itemize}
If property (iii) is not satisfied and $f$ has at most exponential growth at cusps, then we say $f$ is a \textbf{weakly holomorphic modular form}. Furthermore, if $k = 0$, then we say $f$ is a \textbf{modular function}. 
\end{definition} 

\noin All groups $\Gamma \leq \SL_2(\Z)$ considered in this paper have \emph{width $1$ at infinity}, i.e. contain $T:= \begin{pmatrix} 1 & 1 \\ 0 & 1 \end{pmatrix}$. Hence any modular form on $\Gamma$ is invariant under $\tau \mapsto \tau+1$, so it has a Fourier expansion. 

\begin{equation*}
f = \sum_{n \gg -\infty}^\infty a_n q^n,
\end{equation*}
where here and throughout the paper we define $q := \exp{2 \pi i \t}$.
\begin{definition}\label{def:cong_sub}
For $N \in \N$, we define the congruence subgroup 
$$\Gamma_0(N) := \left\{\begin{pmatrix}
a & b\\
c & d
\end{pmatrix} \in \SL_2(\Z): c \equiv 0 \pmod{N}\right\}.$$
Note that $\Gamma_0(1) = \SL_2(\Z)$.
\end{definition}
\noin We say that a modular form $f$ is \emph{strictly of level $N$} if $f$ is a modular form on $\Gamma_0(N)$ and \emph{not} on $\Gamma_0(d)$ for any proper divisor $d \mid N$. Denote by $M_k(N)$ (resp. $M^!_k(N)$) the space of entire (resp. weakly holomorphic) modular forms strictly of level $N$ and weight $k$.
\\\\
Recall that the normalized Eisenstein series of weight $k$ is defined by
$$E_{k}(\t) := 1 - \frac{2k}{B_k}\sum_{n=1}^\infty \sigma_{k-1}(n)q^n$$
where
$$\sigma_k(n) := \sum_{d | n} d^k$$
and $B_k$ is the $k$th Bernoulli number. For even $k \geq 4$, it is well-known that
$$E_{k}(\t) = \frac{1}{2\zeta(k)} \sum_{(m,n) \in \Z^2 \setminus \{(0,0)\}} \frac{1}{(m \t + n)^k}$$
and hence $E_{k} \in M_k(1)$ when $k \geq 4$ is even. Also recall the Dedekind eta-function, defined by
$$\eta(\t) := q^{1/24} \prod_{n=1}^\infty (1-q^n)$$
for which
\begin{equation}\label{eq:eta_S}
\eta(\t + 1) = \exp{\pi i / 12} \eta(\t), \hspace{1.5em} \eta(-1/\t) = \sqrt{- i \t} \cdot \eta(\t)
\end{equation}
hold. Recall the modular discriminant defined by
\begin{align*}
\Delta(\t) &:= \eta(\t)^{24}
=\frac{E_4(\t)^3 - E_6(\t)^2}{1728} = q - 24q^2 + O(q^3) \in M_{12}(1)
\end{align*}
is a cusp form. It is well known that $\Delta(N \tau)$ is strictly of level $N$. We define the normalized $j$-function,
\begin{equation*}
J(\t) := \frac{E_4(\t)^3}{\Delta(\t)} - 744 = q^{-1} + 196884q + O(q^2) \in M_0^!(1).
\end{equation*}
\noin The action of the $m^{\text{th}}$ order normalized \emph{Hecke operator} on a weight $0$ modular form is given by 
\begin{equation}\label{eq:hecke}
f(\tau) \mid mT(m) := \sum_{\substack{a d = m 
\\ 0 \leq b < d}} f\left(\frac{a \t + b}{d}\right).
\end{equation}
In particular, for primes $p$,
\begin{equation}\label{eq:hecke_prime}
f(\tau) \mid pT(p) = f(p \t) + \sum_{b=0}^{p-1} f\left(\frac{\tau + b}{p}\right).
\end{equation}
\\
The integrality of certain $\C[G]$-modules rests on congruences between Fourier coefficients of modular forms, for which the following notation is convenient. For a modular form
$$f = \sum_{n \gg -\infty} a(n)q^n$$
where $a(n) \in \Z$, define
$$\qord_m(f) := \inf \{n : m \nmid a(n) \}$$
with the convention that $\qord_m(f) = \infty$ if $m | a(n)$ for all $n$. Then we have the following result from \cite{sturm}, which allows finite computations to prove congruences between modular forms:
\begin{theorem}[Sturm]\label{thm:sturm}
Let $f \in M_k(N)$ with integer coefficients. If
$$\qord_p(f) > \frac{k [\SL_2(\Z) : \Gamma_0(N)]}{12},$$
then
$$\qord_p(f) = \infty$$
for all primes $p$.
\end{theorem}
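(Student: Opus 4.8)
The plan is to reduce Sturm's estimate to the special case $N=1$ of the full modular group---where it becomes an elementary statement about the echelon basis of $M_k(\SL_2(\Z);\Z)$---and then to descend from $\SL_2(\Z)$ to $\Gamma_0(N)$ by a multiplicative norm over the cosets $\Gamma_0(N)\backslash\SL_2(\Z)$. Fix the prime $p$; what must be shown is that $\qord_p(f) > k[\SL_2(\Z):\Gamma_0(N)]/12$ forces $\qord_p(f)=\infty$, i.e. $f \equiv 0 \pmod p$. Since $M_k(\SL_2(\Z))=0$ for odd $k$, we may assume $k$ is even.

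First I would settle $N=1$. As $M_*(\SL_2(\Z);\Z)=\Z[E_4,E_6]$, Gaussian elimination over $\Z$ produces a basis $f_0,f_1,\dots,f_{d-1}$ of $M_k(\SL_2(\Z);\Z)$ in echelon form, $f_i = q^i + O(q^d)$, where $d = \dim_\C M_k(\SL_2(\Z)) \le \lfloor k/12\rfloor + 1$. Writing $f = \sum_i c_i f_i$ and comparing the coefficients of $q^0,\dots,q^{d-1}$, the echelon shape forces $c_i = a(i)$, so all $c_i$ are integers; and if $p \mid a(n)$ for every $n \le k/12$, then---since $d-1 \le \lfloor k/12\rfloor$---every $c_i=a(i)$ is divisible by $p$, whence $f \equiv 0 \pmod p$.

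For general $N$, set $\mu := [\SL_2(\Z):\Gamma_0(N)]$, fix right coset representatives $\gamma_1 = I,\gamma_2,\dots,\gamma_\mu$ of $\Gamma_0(N)$ in $\SL_2(\Z)$, and form the norm $F := \prod_{j=1}^\mu f|_k\gamma_j$. Because right translation by any $\delta \in \SL_2(\Z)$ permutes the cosets and $f|_k\sigma = f$ for $\sigma \in \Gamma_0(N)$, the product $F$ is $\SL_2(\Z)$-invariant of weight $k\mu$; each factor $f|_k\gamma_j$ is a holomorphic modular form on $\gamma_j^{-1}\Gamma_0(N)\gamma_j$, hence holomorphic at $\infty$, so $F \in M_{k\mu}(\SL_2(\Z))$. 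Write $F = f\cdot G$ with $G := \prod_{j\ge 2} f|_k\gamma_j$, and suppose $p \nmid N$. Then every conjugate $f|_k\gamma_j$ has a $q^{1/N}$-expansion with coefficients in $\Z[\zeta_N,1/N]$, hence $p$-integral, and comparing $q$-expansions term by term gives $\qord_p(F) \ge \qord_p(f)$: any Fourier coefficient of $F$ of index below $\qord_p(f)$ is a finite sum of products $a(n)b(m)$ with $n + m/N$ that index and $m \ge 0$, which forces $n < \qord_p(f)$ and so $p \mid a(n)$. Thus $\qord_p(F) \ge \qord_p(f) > k\mu/12$, one twelfth of the weight of $F$, and the case $N=1$ gives $F \equiv 0 \pmod p$. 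Since $f$ has integer coefficients and $G$ is a product of conjugates of $f$, a short argument---were instead some $f|_k\gamma_j \equiv 0 \pmod p$, apply $|_k\gamma_j^{-1}$, which preserves $p$-integrality of $q^{1/N}$-expansions when $p \nmid N$---then yields $f \equiv 0 \pmod p$.

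The main obstacle is the case $p \mid N$: here the conjugates $f|_k\gamma_j$ may pick up denominators that are powers of $p$---already for $f = \Delta(N\tau)$ one finds $F$ equals a negative power of $p$ times $\Delta^\mu$---so the norm argument breaks. I expect this case to require replacing the echelon bookkeeping with a geometric input: realize a weight-$k$ modular form mod $p$ on $\Gamma_0(N)$ (or on $\Gamma_1(N)$) as a section of $\omega^{\otimes k}$ on the mod-$p$ fiber of the corresponding modular curve---singular at $p$, but admitting a Deligne--Rapoport model over $\Z_p$ for which Katz's $q$-expansion principle still holds---together with the degree identity $\deg \omega^{\otimes k} = k\mu/12$. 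Alternatively one can clear the bad primes by first multiplying $f$ by a suitable Eisenstein series or a power of $\Delta(N'\tau)$ before taking the norm. It is this step, and not the combinatorics at level one, where the real content of the bound lies.
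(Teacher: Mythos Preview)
The paper does not prove Sturm's theorem; it quotes the result from Sturm's article and uses it as a black box (only the extension to weakly holomorphic forms, Lemma~\ref{lem:sturm}, receives an argument). So there is no in-paper proof to compare against.

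Your approach---settle level one via the echelon basis of $M_k(\SL_2(\Z);\Z)$, then descend by the norm $F=\prod_jf|_k\gamma_j$ over coset representatives---is exactly the classical one, and your treatment of level one and of the case $p\nmid N$ is correct. The gap you flag yourself is real: when $p\mid N$ the product $G=\prod_{j\ge2}f|_k\gamma_j$ can carry powers of $p$ in its denominators, and then the key inequality $\qord_p(F)\ge\qord_p(f)$ simply fails, so the argument as written does not cover primes dividing the level.

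Your proposed remedies (the geometric $q$-expansion principle on a Deligne--Rapoport model, or pre-multiplying by an auxiliary form) would ultimately work but are heavier than necessary. The standard elementary fix stays inside the norm argument: work over a prime $\mathfrak p$ of $\Z[\zeta_N]$ above $p$, let $\lambda_j$ be the minimal $\mathfrak p$-valuation among the $q^{1/N}$-coefficients of $f|_k\gamma_j$, and replace $F$ by $\widetilde F=\prod_j\pi^{-\lambda_j}(f|_k\gamma_j)$ for a uniformizer $\pi$. Assuming $f\not\equiv0\pmod p$ one has $\lambda_1=0$, so the $j=1$ factor is $f$ itself; each factor is now $\mathfrak p$-integral and nonzero modulo $\mathfrak p$, hence by Gauss's lemma in the integral domain $(\O_{\mathfrak p}/\mathfrak p)[[q^{1/N}]]$ the product $\widetilde F$ is nonzero modulo $\mathfrak p$, while its $q$-order modulo $\mathfrak p$ is at least $\qord_p(f)>k\mu/12$ because the $j=1$ factor alone contributes that much and the remaining factors have nonnegative $q^{1/N}$-order. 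Since $\widetilde F$ differs from the level-one form $F$ only by a scalar, the level-one Sturm bound over $\O_{\mathfrak p}$ (same echelon-basis computation) forces $\widetilde F\equiv0\pmod{\mathfrak p}$, a contradiction. Until this renormalization step is supplied, the proof is incomplete for $p\mid N$.
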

\noin From this theorem, we obtain the following lemma, allowing us to apply the Sturm bound to weakly holomorphic modular functions:
\begin{lemma}\label{lem:sturm} 
Let $f \in M^!_0(N)$ with integer coefficients, and let $s$ denote the order of the highest order pole of $f$ at any cusp. If
$$\qord_m(f) > s ([\SL_2(\Z) : \Gamma_0(N)] - 1),$$
then
$$\qord_m(f) = \infty$$
for all $m \geq 2$.
\end{lemma}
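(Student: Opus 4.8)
The plan is to clear the poles of $f$ by multiplying through by a power of the modular discriminant $\Delta$, apply the Sturm bound of Theorem~\ref{thm:sturm} to the resulting entire form, and then divide back out. Set $\nu := [\SL_2(\Z):\Gamma_0(N)]$ and $g := \Delta^s f$. Since $\Delta$ is a weight-$12$ cusp form on all of $\SL_2(\Z)$ (hence on $\Gamma_0(N)$) that is holomorphic and nowhere vanishing on $\H$, the function $g$ is holomorphic on $\H$, transforms with weight $12s$ under $\Gamma_0(N)$, and --- because $\Delta$ is $\SL_2(\Z)$-equivariant --- has exactly the same level as $f$, so it is strictly of level $N$. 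The one point that needs care is holomorphy at the cusps: at a cusp of $\Gamma_0(N)$ of width $h \ge 1$, the expansion of $\Delta$ in the local parameter $q_h = e^{2\pi i\tau/h}$ (so that $q = q_h^h$) begins $q_h^h + \cdots$, so $\Delta^s$ vanishes there to order $sh \ge s$, which is enough to cancel the pole of $f$ --- of order at most $s$ at every cusp, by definition of $s$. Hence $g \in M_{12s}(N)$, and $g$ has integer coefficients since $\Delta = q\prod_{n\ge 1}(1-q^n)^{24}$ and $f$ do.

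Next I would track $\qord_m$ through the multiplication. Writing $\Delta^s = \sum_{i \ge s} d(i) q^i$ with $d(i) \in \Z$, the $q^k$-coefficient of $g$ is $\sum_{i\ge s} d(i)\, a(k-i)$, in which every index $k-i$ satisfies $k-i \le k-s$; thus if $k < \qord_m(f) + s$ then $m \mid a(k-i)$ for all relevant $i$, so $m$ divides the $q^k$-coefficient of $g$. This gives $\qord_m(g) \ge \qord_m(f) + s$. Consequently, under the hypothesis $\qord_m(f) > s(\nu - 1)$,
\[
\qord_m(g) \ \ge\ \qord_m(f) + s\ >\ s(\nu-1) + s\ =\ s\nu\ =\ \frac{(12s)\,\nu}{12},
\]
which is precisely the Sturm threshold for a form of weight $12s$ on $\Gamma_0(N)$; this tight match is exactly why the ``$-1$'' appears in the statement of the lemma.

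It remains to invoke Sturm. Theorem~\ref{thm:sturm} is stated for prime moduli, so to cover every $m \ge 2$ I would bootstrap: for a prime $p \mid m$, write $m = p m'$; since $\qord_p(g) \ge \qord_m(g)$ (as $p \mid m$) and the latter exceeds $(12s)\nu/12$, Theorem~\ref{thm:sturm} forces $p$ to divide every coefficient of $g$, whence $g/p$ is again an integer-coefficient weight-$12s$ form on $\Gamma_0(N)$ with $\qord_{m'}(g/p) = \qord_m(g)$, and induction on the number of prime factors of $m$ yields $\qord_m(g) = \infty$. Finally, $\Delta^{-s} = q^{-s}\prod_{n\ge1}(1-q^n)^{-24s}$ also has integer coefficients, so $f = \Delta^{-s} g$ shows that $m$ dividing every coefficient of $g$ forces $m$ to divide every coefficient of $f$: that is, $\qord_m(f) = \infty$. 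I expect the only genuinely delicate step to be the cusp analysis of the first paragraph --- knowing that cusp widths are positive integers and that $\Delta$'s expansion at each cusp of $\Gamma_0(N)$ begins with a positive power of the local parameter, and reading ``the order of the highest order pole of $f$ at any cusp'' with respect to those same parameters; the rest is bookkeeping with $q$-expansions.
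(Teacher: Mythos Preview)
Your proof is correct and follows essentially the same route as the paper: multiply by $\Delta^s$ to land in $M_{12s}(N)$, apply Sturm, and bootstrap from primes to general $m$. Your write-up is in fact more careful than the paper's on two points --- you justify holomorphy at cusps of arbitrary width (the paper simply asserts $f\Delta^s \in M_{12s}(N)$), and your inequality $\qord_m(g) \ge \qord_m(f)+s$ avoids the paper's use of the identity $\qord_p(gh)=\qord_p(g)+\qord_p(h)$, which is only clean for prime $p$; the only cosmetic difference is that the paper peels off primes from $f$ after first reducing to prime-power moduli via the Chinese remainder theorem, whereas you peel them off from $g$ directly.
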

\begin{proof}
By the Chinese remainder theorem it suffices to prove the result for $m=p^n$ a power of a prime. Since the highest order pole of $f$ is $s$, we know that
$$f \Delta^s \in M_{12s}(N).$$
Since $\qord_p(gh) = \qord_p(g) + \qord_p(h)$ for all $q$-series $g$ and $h$, and since $\qord_p(\Delta^s) = s$, we have
$$\qord_p(f \Delta^s) = s + \qord_p(f).$$
From this equality and the hypothesis on $\qord_p(f)$, we immediately get
$$
\qord_p(f \Delta^s) > s[\SL_2(\Z) : \Gamma_0(N)]
$$
so by Theorem \ref{thm:sturm}, $\qord_p(f \Delta^s) = \infty$ and hence $\qord_p(f) = \infty$ as well. Thus $f/p$ has integer coefficients so we may make the same argument, and arguing inductively we have that $\qord_{p^n}(f) = \infty$.
\end{proof}

\subsection{Modular Form Asymptotics}\label{ssc:asymp}

First, we recall exact formulas and asymptotics for the coefficients of Hauptmoduln and images of $J$ under Hecke operators in Lemmas \ref{lem:hecke_coefs} and \ref{lem:haupt_coeefs}, as found in \cite{moonshine_survey, hannah}. To do so, we first recall two objects necessary for these formulas: the \emph{modified Bessel function of the first kind} $$ I_{1}(x) := \frac{x}{2} \sum_{k = 0}^{\infty} \frac{(\frac{1}{4} x^{2})^{k}}{k! \, \Gamma(k + 2)},$$ and the \emph{classical Kloosterman sum} 
\begin{equation} \label{eq:kloosterman_def}
K(m,n,c) :=  \sum_{\substack{x \pmod{c} \\ (x,c) = 1}} \exp{\frac{2 \pi i (m x + n \overline{x})}{c}},
\end{equation}
where $x \cdot \overline{x} \equiv 1 \pmod{c}$.
\begin{lemma}\label{lem:hecke_coefs} If $m$ is a positive integer, then
\begin{equation*}
J(\tau) \mid mT(m) = q^{-m} + \sum_{n \geq 1} c_{e}(m,n) q^{n}
\end{equation*}
where
\begin{equation*}\label{eq:exact_j_hecke}
c_{e}(m,n) = 2 \pi \sqrt{\frac{m}{n}} \sum_{c > 0} \frac{K(-m,n,c)}{c} \cdot I_{1}\left(\frac{4 \pi \sqrt{m n}}{c}\right).
\end{equation*}
\end{lemma}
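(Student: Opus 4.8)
The plan is to first pin down $J(\tau)\mid mT(m)$ as the \emph{unique} weakly holomorphic modular function on $\SL_2(\Z)$ with principal part $q^{-m}$ and no constant term, and then to compute its Fourier coefficients by the circle method (Rademacher's method). For the first step, one checks directly from \eqref{eq:hecke} that $mT(m)$ sends weight-zero modular functions on $\SL_2(\Z)$ to weight-zero modular functions on $\SL_2(\Z)$ (invariance under $T$ is immediate, and invariance under $S$ follows from the usual rearrangement of the coset representatives $\left(\begin{smallmatrix} a & b \\ 0 & d\end{smallmatrix}\right)$ with $ad=m$, $0\le b<d$). From $J=q^{-1}+O(q)$ one then reads off that $f_m:=J\mid mT(m)$ has $q$-expansion $q^{-m}+0+O(q)$, so $f_m\in M_0^!(1)$. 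Any other element of $M_0^!(1)$ with the same principal part and constant term differs from $f_m$ by a holomorphic function on the compact Riemann surface $X(1)=\SL_2(\Z)\backslash\H\cup\{\infty\}$, hence by a constant, and that constant must be $0$; so $f_m$ is the unique such function, and it suffices to compute the coefficients of \emph{any} representative of it.

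For the second step, for $n\ge 1$ write
$$
c_e(m,n)=\int_{-1/2+iY}^{\,1/2+iY} f_m(\tau)\,e^{-2\pi i n\tau}\,d\tau
$$
for fixed $Y>0$ and let $Y\to 0^+$. Dissect the horizontal path by Farey fractions $a/c$ of order $\asymp Y^{-1/2}$. On the Farey arc around $a/c$, apply the modular transformation $\gamma=\left(\begin{smallmatrix} a & b \\ c & d\end{smallmatrix}\right)\in\SL_2(\Z)$: since $f_m(\gamma\tau)=f_m(\tau)$ and $f_m$ blows up at the cusp $a/c$ exactly through the image of its principal part $q^{-m}=e^{-2\pi i m\tau'}$ with $\tau'=(a\tau+b)/(c\tau+d)$, the substitution $\tau=a/c+iz/c$ expands $\tau'=\frac{a}{c}-\frac{1}{c^2}\cdot\frac{1}{\,\tau-a/c\,}+\cdots$ and produces a leading contribution with phase $e^{2\pi i(m\overline{a}+na)/c}$ (here $a\overline{a}\equiv 1\pmod c$). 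Summing over the residues $a\pmod c$ with $(a,c)=1$ assembles exactly the Kloosterman sum $K(-m,n,c)$ of \eqref{eq:kloosterman_def}, while the remaining integral over $z$ is a Hankel-type loop integral which, via the representation $I_1(x)=\frac{x}{4\pi i}\oint t^{-2}e^{\,t+x^2/(4t)}\,dt$, evaluates to $\tfrac{2\pi}{c}\sqrt{m/n}\,I_1\!\big(4\pi\sqrt{mn}/c\big)$. Collecting the contributions over all $c\ge 1$ gives the stated formula.

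The main obstacle is the analytic bookkeeping of the circle method: showing that, on each Farey arc, the part of $f_m$ other than the transformed principal part contributes negligibly as $Y\to 0$, and that the resulting series over $c$ converges absolutely so that the limit can be taken termwise. For the latter, note $I_1(x)\sim x/2$ as $x\to 0^+$, so the $c$-th term is $O\!\big(c^{-2}\,|K(-m,n,c)|\big)$; Weil's bound $|K(-m,n,c)|\ll_\varepsilon c^{1/2+\varepsilon}\gcd(m,n,c)^{1/2}$ then yields absolute convergence. An alternative that avoids the Farey dissection altogether is to realize $f_m$ as a regularized weight-zero Maass--Poincar\'e series attached to the harmonic $q^{-m}$ at $\infty$ and to read off its Fourier expansion by unfolding, where the Kloosterman--Bessel sums appear directly; this is essentially the route taken in the references \cite{moonshine_survey, hannah}.
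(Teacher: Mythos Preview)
Your argument is correct, but it takes a different route from the paper. The paper constructs the weight-zero Maass--Poincar\'e series $\P_1(\phi_m;\tau)=\sum_{\gamma\in\Gamma_\infty\backslash\SL_2(\Z)}\phi_m(\gamma\tau)$ with $\phi_m$ built from a Whittaker function, notes that this is a priori a harmonic Maass form, and then uses the fact that $\xi_0$ maps into the (trivial) space of weight-$2$ cusp forms on a genus-zero group to conclude it is actually weakly holomorphic; the Kloosterman--Bessel expansion then drops out of the standard unfolding computation for Poincar\'e series, and uniqueness of the principal part identifies it with $J\mid mT(m)$. You instead fix $J\mid mT(m)$ first, establish the same uniqueness, and extract the coefficients by Rademacher's Farey-arc circle method. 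What each buys: the Poincar\'e-series route packages the convergence and error analysis into the general theory of harmonic Maass forms, so one can simply cite the Fourier expansion; your circle-method route is more self-contained and classical, but as you note the weight-zero case sits exactly at the boundary of convergence, so the ``analytic bookkeeping'' of showing the non-principal contributions on each Farey arc vanish in the limit is genuinely delicate (this is the step that historically required Rademacher's refinement over Hardy--Ramanujan). Your final remark that one can sidestep this via the regularized Poincar\'e series is precisely the paper's approach, so you have in fact identified both proofs.
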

\noin Moreover, by Theorem $1.2$ of \cite{hannah}, we have that, as $n \to \infty$,
\begin{equation}\label{eq:j_asym}
c_{e}(m,n) \sim \frac{m^{1/4}}{\sqrt{2}n^{3/4}} \exp{4\pi \sqrt{mn}}.
\end{equation}
\begin{remark}
Lemma \ref{lem:hecke_coefs} provides asymptotics of images of $J$ under Hecke operators to be used in Section \ref{ssc:prf_1.1}. A much more natural description of the image of $J$ under the $m\tth$ Hecke operator in terms of the $m\tth$ Faber Polynomial is provided by the theory of replicability as described in Section \ref{ssc:replicability}.
\end{remark}
\noin In the case of Theorem \ref{thm:abelian_moonshine}, we specify that the McKay-Thompson series $T_{g}$ for any $g \in G$ be the Hauptmodul for the genus zero subgroup $\Gamma_{0}(\ord(g)).$  
\begin{lemma} \label{lem:haupt_coeefs}
The Hauptmodul for the genus zero subgroup $\Gamma_{0}(\ord(g))$ is given by
\begin{equation*}
    T_{g}(\tau) = q^{-1} + \sum_{n = 1}^{\infty} c_{g}(-1,n) q^{n}, 
\end{equation*}
where 
\begin{equation*}
    c_{g}(1,n) = 2 \pi \sqrt{\frac{1}{n}} \sum_{c > 0} \frac{K(-1,n,c \ord(g))}{c \ord(g)} \cdot I_{1}\left(\frac{4 \pi \sqrt{n}}{c  \ord(g)}\right)
\end{equation*}
\end{lemma}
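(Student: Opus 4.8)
\textbf{Proof proposal for Lemma \ref{lem:haupt_coeefs}.}

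The plan is to realize the normalized Hauptmodul $T_g$ for $\Gamma_0(\ord(g))$ as a Poincaré series attached to the cusp at $\infty$ and then read off its Fourier coefficients by the standard unfolding argument. Write $N := \ord(g)$. Since $N$ lies in the list \eqref{eq:genus_zero_N}, $\Gamma_0(N)\backslash\H$ has genus zero, so there is a unique weight-$0$ weakly holomorphic modular function on $\Gamma_0(N)$ with $q$-expansion $q^{-1}+O(q)$ at the cusp $\infty$ and holomorphic (in fact vanishing, after normalization — but at worst bounded) at every other cusp; this is $T_g$. The first step is to justify that $T_g$ equals the (suitably regularized) Poincaré series
\[
P(\tau) = \sum_{\gamma \in \Gamma_\infty \backslash \Gamma_0(N)} e^{-2\pi i\, \gamma\tau},
\]
where $\Gamma_\infty = \langle T\rangle$ is the stabilizer of $\infty$; this follows because $P$ transforms correctly, has principal part exactly $q^{-1}$ at $\infty$, and — crucially for genus zero — is holomorphic at the other cusps, so $P - T_g$ is a holomorphic modular function on a genus-zero curve with no pole, hence constant, and the constant is $0$ after matching the normalization $q^{-1}+O(q)$ (absorbing any constant term, exactly as in the definition of $J$).

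The second step is the classical computation of the Fourier expansion of $P$. Parametrizing the nontrivial cosets $\Gamma_\infty\backslash\Gamma_0(N)/\Gamma_\infty$ by the lower-left entry $c = c' N > 0$ (with $N \mid c$ because $\gamma\in\Gamma_0(N)$) and the residues of $a,d$ modulo $c$, one expands $e^{-2\pi i \gamma\tau}$ with $\gamma\tau = \frac{a}{c} - \frac{1}{c(c\tau+d)}$, collects the resulting exponential sums into Kloosterman sums $K(-1,n,c)$, and evaluates the remaining integral $\int_{-\infty}^{\infty} e^{2\pi i/(c^2 w)} e^{-2\pi i n w}\,dw$ (after the change of variables $w = \tau + d/c$) as a Bessel integral, giving the factor $\tfrac{2\pi}{c}\sqrt{1/n}\, I_1\!\big(\tfrac{4\pi\sqrt n}{c}\big)$. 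Writing $c = c'N$ and summing over $c' > 0$ yields precisely
\[
c_g(1,n) = 2\pi\sqrt{\tfrac{1}{n}}\sum_{c>0}\frac{K(-1,n,cN)}{cN}\, I_1\!\left(\frac{4\pi\sqrt n}{cN}\right),
\]
as claimed. In fact one can bypass steps one and two entirely by specializing Lemma \ref{lem:hecke_coefs} and the general Rademacher-type machinery already cited from \cite{moonshine_survey, hannah}: the Hauptmodul is the weight-$0$ Maass–Poincaré series of index $-1$ on $\Gamma_0(N)$, and its coefficient formula is the $m=1$, level-$N$ case of the same Rademacher expansion used for $J\mid mT(m)$ — the only change is that the Kloosterman sum is taken with modulus running over multiples of $N$ rather than all positive integers.

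The main obstacle is the first step: verifying that the Poincaré series $P$ has no extra pole at the cusps other than $\infty$, so that $P - T_g$ is genuinely holomorphic and the genus-zero uniqueness argument applies. This requires checking the behavior of $e^{-2\pi i\gamma\tau}$ at each inequivalent cusp $\mathfrak{a}\neq\infty$, or equivalently that the index-$(-1)$ Poincaré series attached to $\infty$ has vanishing principal part at every other cusp — a standard but nontrivial point about Poincaré series on $\Gamma_0(N)$ (convergence for weight $0$ also needs the usual Hecke regularization). Granting the cited Rademacher-sum results from \cite{moonshine_survey, hannah}, however, this reduces to bookkeeping, and the remaining work is the routine Kloosterman-sum-and-Bessel unfolding, which I would not carry out in detail.
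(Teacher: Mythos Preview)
Your approach is essentially the same as the paper's: realize the Hauptmodul as a weight-$0$ Poincar\'e series on $\Gamma_0(N)$, compute its Fourier coefficients via the standard Kloosterman/Bessel unfolding, and invoke genus-zero uniqueness. The paper resolves your acknowledged ``main obstacle'' (convergence in weight $0$ and the absence of spurious contributions) by working from the outset with the Maass--Poincar\'e series built from the $M$-Whittaker function $\phi_m(\tau)=M_{0,1/2}(4\pi m y)e^{-2\pi i m x}$ and then observing that $\xi_0$ maps weight-$0$ harmonic Maass forms into $S_2(\Gamma_0(N))$, which is zero precisely because $N$ is genus zero; this forces the Poincar\'e series to be weakly holomorphic, after which the uniqueness argument you give goes through verbatim.
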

We now sketch the proofs of Lemmas \ref{lem:hecke_coefs} and \ref{lem:haupt_coeefs}, which can be found in, for example, Chapter 6 of \cite{BFOR} or in \cite{hannah}.
\begin{proof}[Sketch of Proof of Lemmas \ref{lem:hecke_coefs} and  \ref{lem:haupt_coeefs}] Lemma \ref{lem:hecke_coefs} provides exact formulas for the coefficients of images of $J$ under Hecke operators. Lemma \ref{lem:haupt_coeefs} provides exact formulas for coefficeints of Hauptmoduln of genus zero congruence subgroups. The Poincar\'e series on $\Gamma_{0}(N)$ are natural generators for both images of $J$ under Hecke operators and Hauptmoduln of genus zero congruence subgroups as by construction they are invariant under the desired subgroups and have a pole of the desired order at infinity. Following notation as in Chapter 6 of \cite{BFOR}, weight zero Poincar\'e series on genus zero congruence subgroups with a pole of order $m$ at infinity are given by $$\P_{N}(\phi_{m}; \tau) := \sum_{\gamma \in \Gamma_{\infty} \backslash \Gamma_{0}(N)} \phi_{m}(\gamma \tau),$$ where writing $\tau = x + i y$ with $x,y \in \R, y > 0,$ we have $$\phi_{m}(\tau) := M_{0, \frac{1}{2}} ( 4 \pi m y) \exp{-2 \pi i m x },$$ where $M_{\mu, \nu}(w)$ is the classical $M$-Whittaker function. In general, the functions $\P_{N}(\phi_{m}; \tau)$ are harmonic Maass forms with both holomorphic and nonholomorphic parts, and so are not necessarily weakly holomorphic functions. However, any harmonic Maass form in the kernel of the $\xi$ differential operator $$\xi_{w} := 2 i y^{w} \cdot \frac{\overline{\partial}}{\partial \overline{z}}$$ is weakly holomorphic. It is known (see Chapter 5 in \cite{BFOR}) that $$\xi_{2-k} : H_{2 -k}(N) \rightarrow S_{k}(N),$$ where $H_{w}(N)$ denotes the space of weight $w$ harmonic Maass forms on $\Gamma_{0}(N)$ and $S_{w}(N)$ denotes the subspace of cusp forms. Since we are dealing wtih genus zero subgroups, the space of weight two cusp forms is empty and the weight zero Poincar\'e series on $\Gamma_{0}(N)$ we construct are assured of being weakly holomorphic. As shown in, for example, Chapter 6 of \cite{BFOR}, the Fourier expansion of the holomorphic part of $\P_{N}(\phi_{m}; \tau),$ normalized to remove constants, is given by $$ \P_{N}(\phi_{m}; \tau) = q^{-m} + \sum_{n=1}^{\infty} b_{m}^{+}(n) q^{n},$$ where $$b_{m}^{+}(n) = 2 \pi \sqrt{\frac{m}{n}} \cdot \sum_{\substack{c > 0 \\ N | c}} \frac{K(-m,n,c)}{c} \cdot I_{1}\left(\frac{4 \pi \sqrt{m n}}{c}\right).$$ We have that $\P_{N}(\phi_{m}; \tau)$ is therefore a modular function whose Fourier expansion is $q^{-m} + O(q)$ with no other poles. Such functions are unique, and Lemmas \ref{lem:hecke_coefs} and \ref{lem:haupt_coeefs} follow immediately.
\end{proof}
\noin Asymptotics for other functions we will use follow from Ingham's Tauberian theorem \cite{BM, Tessa, Ing}:
\begin{theorem}[Ingham]\label{thm:taub}
Let $f(q) = \sum_{n \geq 0} a(n) q^{n}$ be a  power series  with weakly increasing nonnegative coefficients and radius  of convergence equal to 1. If there are constants $ A > 0, \lambda, \alpha \in \R$ such that
    \begin{equation*}
    f(\exp{-\eps}) \sim \lambda \epsilon^{\alpha} \exp{a/\epsilon}
    \end{equation*}
as $\epsilon \to 0^{+}$, then as $n \to \infty$ we have that
    \begin{equation*}
    a(n) \sim \frac{\lambda}{2 \sqrt{\pi}} \frac{A^{\frac{\alpha}{2} + \frac{1}{4}}}{n^{\frac{\alpha}{2} + \frac{3}{4}}} \exp{2 \sqrt{A n}}.
    \end{equation*}
\end{theorem}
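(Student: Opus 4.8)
Since this is a classical statement, in practice one would just cite \cite{Ing} (or \cite{BM,Tessa}); but here is how I would organize a self-contained proof, treating it as a Tauberian theorem and comparing $f$ with an explicit reference function. Put $\beta := \alpha/2 + 3/4$ and $C := \frac{\lambda}{2\sqrt\pi}\,A^{\alpha/2+1/4}$, and set $r_n := C\,n^{-\beta}\exp{2\sqrt{An}}$, which is exactly the proposed asymptotic value of $a(n)$. The first (``Abelian'') step is to show $\sum_{n\ge1} r_n x^n \sim \lambda\eps^\alpha\exp{A/\eps}$ as $x = \exp{-\eps}\to 1^-$. This is Laplace's method: approximate the sum by $C\int_1^\infty t^{-\beta}\exp{2\sqrt{At}-\eps t}\,dt$, substitute $t = s^2$, complete the square to write the exponent as $-\eps(s-\sqrt{A}/\eps)^2 + A/\eps$, and evaluate the Gaussian; the factor $\int_\R \exp{-\eps u^2}\,du = \sqrt{\pi/\eps}$ is precisely what pins down the exponent $\beta$ and the constant $C$ (in particular the $\frac{1}{2\sqrt\pi}$) in the statement. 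It follows that $f(\exp{-\eps}) \sim \sum_{n\ge1} r_n \exp{-n\eps}$, i.e.\ $f$ and $\sum r_n x^n$ have the same behavior as $x\to1^-$.

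The second step is the genuinely Tauberian one: deduce $a(n)\sim r_n$, using the weak monotonicity of $(a(n))$. The naive argument --- for $x\in(0,1)$, $f(x)\ge\sum_{k\ge n}a(k)x^k\ge a(n)\,x^n/(1-x)$, so $a(n)\le(1-x)x^{-n}f(x)$, optimized at $x = \exp{-\sqrt{A/n}}$, together with a matching lower bound --- recovers the right exponential $\exp{2\sqrt{An}}$ but misses the sharp constant: its bounds are off by a factor of order $n^{1/4}$, the ``width of the saddle.'' To remove this I would first estimate the partial sums $S(n) := \sum_{k\le n} a(k)$: from $\sum_k a(k)\exp{-k\eps} = f(\exp{-\eps}) \sim \lambda\eps^\alpha\exp{A/\eps}$, a Tauberian theorem for Laplace transforms with rapidly growing right-hand side (Ingham/Kohlbecker type, needing only $a(k)\ge0$) yields $S(n)\sim\int_1^n r_t\,dt$. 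Then recover $a(n)$ by differencing and monotonicity: $\frac{S(n)-S(n-h)}{h}\le a(n)\le\frac{S(n+h)-S(n)}{h}$, and choosing $h = h_n$ with $h_n\to\infty$ but $h_n = o(\sqrt n)$ forces both outer quantities to $r_n$, since $r_t$ is essentially constant on such a window and the $o(\cdot)$-error in $S(n)\sim\int_1^n r_t\,dt$ is negligible against $h\,r_n$.

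The main obstacle is exactly this second step: passing from a generating-function asymptotic to a coefficient asymptotic \emph{with the correct constant}. Concretely, the work is in (i) the partial-sum Tauberian theorem with exponential gauge function and (ii) the window-differencing, where the window width $h_n$ is pinched into the narrow range $1 \ll h_n \ll \sqrt n$; these are the content of the references cited for the theorem. Everything else --- the Laplace computation, tracking the exponents, the elementary sandwich inequalities --- is routine bookkeeping.
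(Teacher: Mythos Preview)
The paper does not prove this theorem; it is quoted from \cite{Ing} (see also \cite{BM,Tessa}) and immediately applied as a black box in Lemma~\ref{lem:delta_asym}. There is no argument in the paper to compare your sketch against, and your opening sentence already anticipates exactly this.

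That said, your Tauberian half has a genuine gap. The Abelian step (Laplace's method showing $\sum_n r_n e^{-n\eps}\sim \lambda\eps^\alpha e^{A/\eps}$) is fine. The problem is the window-differencing. If the partial-sum Tauberian input you invoke gives only $S(n)\sim I(n):=\int_1^n r_t\,dt$, the error is merely $o(I(n))$; but $I(n)\asymp r_n\sqrt{n}$, since the integrand is dominated by a saddle window of width $\asymp\sqrt n$ near $t=n$. Differencing,
\[
S(n+h)-S(n)=\bigl(I(n+h)-I(n)\bigr)+o\bigl(r_n\sqrt n\,\bigr).
\]
For $h=o(\sqrt n)$ the main term $\approx h\,r_n$ is itself $o(r_n\sqrt n)$, so the error swallows it; for $h$ of order $\sqrt n$ or larger, ``$r_t$ essentially constant on the window'' fails (indeed $r_{n+\sqrt n}/r_n\to e^{2\sqrt A}\neq 1$). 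No $h_n$ in your range $1\ll h_n\ll\sqrt n$ works with only the crude asymptotic on $S$. To close the argument you need either a partial-sum result with a \emph{quantitative} remainder sharper than $o(I(n))$, or---closer to Ingham's actual route---to bypass partial sums and difference the generating function itself at two nearby values of $\eps$, using monotonicity to localize the resulting weighted sum. Your final paragraph rightly flags steps (i)--(ii) as where the content lies, but the specific partial-sum-then-difference reduction you have written does not go through as stated.
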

\noin This gives us the following asymptotic.
\begin{lemma}\label{lem:delta_asym}
Let $t \in \N$ and define $b_{m,t}(n)$ by 
\begin{equation*}
\pfrac{\Delta(m\tau)}{\Delta(\tau)}^t = \sum_{n=t(m-1)}^\infty b_{m,t}(n) q^n.
\end{equation*}
Then as $n \to \infty$ we have that
\begin{equation*}
b_{m,t}(n) \sim \frac{t\pfrac{m-1}{m}^{1/4}}{m^{12t}\sqrt{2}n^{3/4}} \cdot \exp{4\pi \sqrt{\frac{m-1}{m}tn}}.
\end{equation*}

\end{lemma}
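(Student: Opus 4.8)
The plan is to apply Ingham's Tauberian theorem (Theorem \ref{thm:taub}) to the power series $f(q) := \pfrac{\Delta(m\tau)}{\Delta(\tau)}^t$, which requires two inputs: (a) that the coefficients of $f$ are nonnegative and weakly increasing and that $f$ has radius of convergence $1$; and (b) the radial asymptotic of $f(q)$ as $q = \exp{-\epsilon} \to 1^{-}$, i.e. as $\epsilon \to 0^{+}$.

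For (a) I would use the eta-product identity
$$\frac{\Delta(m\tau)}{\Delta(\tau)} = \frac{\eta(m\tau)^{24}}{\eta(\tau)^{24}} = q^{m-1} \prod_{n \geq 1} \frac{(1-q^{mn})^{24}}{(1-q^n)^{24}} = q^{m-1} \prod_{\substack{n \geq 1 \\ m \nmid n}} \frac{1}{(1-q^n)^{24}},$$
so that $f(q) = q^{t(m-1)} \prod_{m \nmid n}(1-q^n)^{-24t}$. The non-leading factor is an infinite product of terms $(1-q^n)^{-1}$ raised to positive powers, hence has nonnegative coefficients; and since $1$ occurs among the allowed exponents $n$ (as $m \geq 2$; the case $m=1$ is trivial), one can split off $(1-q)^{-24t}$, whose coefficients $\binom{j+24t-1}{24t-1}$ are weakly increasing in $j$, and use that convolving a series with weakly increasing nonnegative coefficients against one with nonnegative coefficients again has weakly increasing coefficients. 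Thus the $b_{m,t}(n)$ are nonnegative and weakly increasing. Since $\Delta$ is nonvanishing on $\H$ while $\Delta(m\tau)/\Delta(\tau)$ is a modular function on $\Gamma_0(m)$ with a pole at the cusp $0$, the series $f$ has radius of convergence exactly $1$.

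For (b) I would evaluate $f$ along $\tau = iy$ with $y \to 0^{+}$, using $\eta(-1/\tau) = \sqrt{-i\tau}\,\eta(\tau)$, equivalently $\Delta(-1/\tau) = \tau^{12}\Delta(\tau)$. Writing $iy = -1/(i/y)$ one obtains $\Delta(iy) = y^{-12}\Delta(i/y) \sim y^{-12}\exp{-2\pi/y}$ and likewise $\Delta(imy) \sim (my)^{-12}\exp{-2\pi/(my)}$, so that
$$\pfrac{\Delta(m\tau)}{\Delta(\tau)}^t \Big|_{\tau = iy} \sim m^{-12t}\, \exp{\frac{2\pi t(m-1)}{my}}.$$
Substituting $q = \exp{-2\pi y} = \exp{-\epsilon}$, i.e. $y = \epsilon/(2\pi)$, rewrites this as $f(\exp{-\epsilon}) \sim \lambda\, \epsilon^{\alpha}\exp{A/\epsilon}$ with $\lambda = m^{-12t}$, $\alpha = 0$, and $A = 4\pi^2 t(m-1)/m$.

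Finally, plugging $\lambda$, $\alpha$, $A$ into Theorem \ref{thm:taub} and simplifying, using $A^{1/4} = \sqrt{2\pi}\,\pfrac{t(m-1)}{m}^{1/4}$ and $2\sqrt{An} = 4\pi\sqrt{\tfrac{m-1}{m}tn}$, yields the stated asymptotic for $b_{m,t}(n)$. The main obstacle is not conceptual but bookkeeping: verifying the monotonicity hypothesis of Ingham's theorem and carefully tracking all constants through the modular transformation of $\Delta$. Once the eta-product identity and the transformation law $\Delta(-1/\tau) = \tau^{12}\Delta(\tau)$ are in hand, everything else is routine.
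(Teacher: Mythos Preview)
Your proposal is correct and follows essentially the same route as the paper: compute the radial asymptotic of $\pfrac{\Delta(m\tau)}{\Delta(\tau)}^t$ as $q \to 1^-$ via the transformation law for $\eta$ (equivalently $\Delta$), read off $\lambda = m^{-12t}$, $\alpha = 0$, $A = 4\pi^2 t(m-1)/m$, and feed these into Ingham's Tauberian theorem. If anything, you are more careful than the paper, which does not explicitly verify the nonnegativity and weak monotonicity hypotheses of Theorem~\ref{thm:taub}; your eta-product argument for part (a) is a clean way to handle this.
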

\begin{proof}
Since $q=\exp{2\pi i \tau}$, we have $2\pi i \tau = -\eps$ in the notation of Theorem \ref{thm:taub}. Recall from \eqref{eq:eta_S} that $\eta(-\frac{1}{\tau}) = \sqrt{-i\tau} \cdot \eta(\tau)$, $\Delta(\tau) = \eta(\tau)^{24}$, and the asymptotic $\eta(i n)^k \sim \exp{-\frac{\pi k}{12} n}$. Together these imply that 
\begin{align*}
\pfrac{\Delta(m\tau)}{\Delta(\tau)}^t = \frac{\pfrac{2\pi}{m\eps}^{12t}}{\pfrac{2\pi}{\eps}^{12t}} \pfrac{\eta\pfrac{2\pi i}{m \eps}}{\eta\pfrac{2\pi i}{\eps}}^{24t} \sim \frac{1}{m^{12t}}\exp{\frac{4\pi^2 t(m-1)}{m} \cdot \frac{1}{\eps}}
\end{align*}
as $\eps \to 0^+$. Hence in the notation of Theorem \ref{thm:taub},  $\lambda = \frac{1}{m^{12t}}$, $\alpha = 0$, and $A = \frac{4\pi^2 t(m-1)}{m}$, 
which implies that
$$b_{m,t}(n) \sim \frac{1}{m^{12t}}\frac{t\pfrac{m-1}{m}^{1/4}}{\sqrt{2} n^{3/4}}\exp{4\pi \sqrt{\frac{m-1}{m}t} \sqrt{n}}.$$ 
\end{proof}

\subsection{Replicability}\label{ssc:replicability}

The formula \eqref{eq:hecke} for the Hecke operator applied to $f(\tau)$, where $\tau$ is seen as representing the lattice $\Lambda(\tau) = \langle 1, \tau\rangle \subset \C$, can be seen as the sum 
\begin{equation}\label{eq:hecke_sublattices}
f \mid nT(n) = \sum_{[\Lambda(\tau):\Lambda'] = n}f(\Lambda')
\end{equation}
by writing a basis for all sublattices of $\Lambda(\tau)$ of index $n$. If we define $$m(\Lambda') := \min\{d \in \N\subset \C \mid d\in \Lambda'\}$$ then replace $f$ by some $f^{(a)} = f^{(n/d)}$ inside the sum in \eqref{eq:hecke_sublattices} for different values $d = m(\Lambda')$ for different sublattices, we arrive a a natural generalization of the Hecke operators suited to a remarkable property of the Hauptmoduln called \emph{replicability}.

Explicitly, the function $J(\tau)$ and the other Hauptmoduln are unique in that they satisfy certain \emph{replication formulas} \cite{Conway-Norton}. This means that for a function $f$ there exist \emph{replicate powers} $f^{(a)}$ such that
\begin{equation}\label{eq:replicability_general}
F_m(f(\tau))= \sum_{\substack{a d = m 
\\ 0 \leq b < d}} f^{(a)} \left( \frac{a \tau + b}{d}\right).
\end{equation}
In the specific case of McKay-Thompson series for moonshine, this takes the form of the equation
\begin{equation}\label{eq:replicability}
F_m(T_g(\tau))= \sum_{\substack{a d = m 
\\ 0 \leq b < d}} T_{g^a} \left( \frac{a \tau + b}{d}\right)
\end{equation}
where $F_m(f)$ is a \emph{Faber polynomial}, the unique polynomial in $\Q[f]$, depending on the coefficients of $f$, such that if $f= q^{-1} + O(q)$ then $F_m(f) = q^{-m} + O(q)$. In particular, when $g = e$ then (\ref{eq:replicability}) says
\begin{equation}\label{eq:j_faber}
    F_m(J(\tau)) = J(\tau) \mid mT(m).
\end{equation}
In fact, a distinct generalization of this formula for $J$ exists for genus-zero Hauptmoduln (see Theorem 1.1 of \cite{lea_larson}). The first few Faber polynomials for $J$ are given by
\begin{align*}
F_1(J) &= J = q^{-1} + 196884q + O(q^2)
\\ F_2(J) &= J^2 - 393768 = q^{-2} + 42987520q + O(q^2)
\\ F_3(J) &= J^3 - 590652J - 64481280 = q^{-3} + 2592899910q + O(q^2).
\end{align*}
More generally, we have \cite{intro_replic}
$$
\frac{q\frac{\partial}{\partial q} J(\t)}{z - J(\t)} = \sum_{m=0}^\infty F_m(z) q^m = 1 + zq + (z^2- 393768)q^2 + \cdots $$
and in fact the denominator formula for the Monster Lie algebra is equivalent to (see Section 6 of \cite{traces})
$$J(\t) - J(z) = q^{-1} \exp{- \sum_{m=1}^\infty F_m(J(z)) \frac{q^m}{m} }. $$
Satisfying the replication formula is roughly equivalent to being a Hauptmodul. Precisely, Norton conjectured that replicable functions with integer coefficients are either of the form $q^{-1} + aq$ or are Hauptmoduln for genus zero congruence subgroups with width 1 at infinity. Cummings and Norton have proven that all such Hauptmoduln with rational coefficients are replicable \cite{cummingsnorton_rational_hauptmoduls_are_replicable}. This property of the Hauptmoduln is a cornerstone of much research into moonshine \cite{carnahan_gen_moonsihne_I}. Furthermore, replicability has applications in number theory: Zagier \cite{traces} makes striking use of the denominator formula in his proof of Borcherds' theorem on infinite product expansions of modular forms with Heegner divisor.

\section{Some Representation Theory for virtual moonshine}\label{sec:rep_thy}

\noin We prove that there exists ``virtual moonshine,'' in the sense below, for any finite group. 

\begin{lemma}\label{lem:pre_moon}
If $G$ is a finite group, then there exists $d \in \N$ and an infinite-dimensional, graded, possibly virtual $\C[G]$-module $$V^G = \bigoplus_{n \geq -d} V_n^G$$ such that for all $g$, we have that $$R_g := \sum_{n \geq -d} \Tr(g | V_n^G)q^n$$ is the $q$-series of a weakly-holomorphic form of weight $0$ and level strictly $\ord(g)$, with integral coefficients. Specifically, for a specified weakly holomorphic modular function $R_e$, we will show this result for $$R_g(q) = R_e + \ordG{G} f_{\ord(g)},$$ where $f_N \in M_0^!(N)$.
\end{lemma}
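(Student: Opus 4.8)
\noindent The plan is to define the graded virtual module directly through the multiplicity generating functions of its irreducible constituents, via Schur orthogonality, using the factor $\ordG{G}$ in $R_g$ precisely to clear the $1/\ordG{G}$ that orthogonality introduces. Assume the prescribed $R_e$ has integer coefficients (e.g.\ $R_e = J(\tau)\mid dT(d)$, as used for Theorem~\ref{thm:moonshine_always}); for each integer $N \geq 2$ occurring as the order of an element of $G$, fix some $f_N \in M_0^!(N)$ with integer coefficients that is strictly of level $N$, and put $f_1 := 0$. Such $f_N$ exist --- for instance $\Delta(N\tau)/\Delta(\tau)$ is a weakly holomorphic modular function of weight $0$ with integer Fourier coefficients that is strictly of level $N$, since if it were $\Gamma_0(d)$-invariant for some proper divisor $d \mid N$ then $\Delta(N\tau) = \bigl(\Delta(N\tau)/\Delta(\tau)\bigr)\Delta(\tau)$ would be a modular form of level dividing $d$, contradicting that $\Delta(N\tau)$ is strictly of level $N$. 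Now set $R_g := R_e + \ordG{G}\, f_{\ord(g)}$. Since $\ord(\cdot)$ is constant on conjugacy classes, $g \mapsto R_g$ is a class function; and since $R_e$ has level $1$ while $\ordG{G}\, f_{\ord(g)}$ is strictly of level $\ord(g)$ (scaling by the nonzero constant $\ordG{G}$ preserves the level), subtracting $R_e$ shows $R_g$ is a weight-$0$ weakly holomorphic modular function strictly of level $\ord(g)$ with integer coefficients.

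\noindent It remains to construct the module, i.e.\ to check that the candidate multiplicities
\[ M_i(q) := \frac{1}{\ordG{G}} \sum_{g \in G} R_g(q)\, \overline{\chi_i(g)} \]
lie in $\Z(\!(q)\!)$ for every irreducible character $\chi_i$ of $G$. Write $S_d := \{ g \in G : \ord(g) = d\}$. Substituting $R_g = R_e + \ordG{G}\, f_{\ord(g)}$ and using that $\sum_{g \in G} \overline{\chi_i(g)}$ equals $\ordG{G}$ if $\chi_i$ is trivial and $0$ otherwise, we get
\[ M_i(q) = \varepsilon_i\, R_e(q) + \sum_{d} f_d(q) \sum_{g \in S_d} \overline{\chi_i(g)}, \]
where $\varepsilon_i \in \{0,1\}$ is $1$ exactly when $\chi_i$ is trivial and the $d$-sum is finite. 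The first term lies in $\Z(\!(q)\!)$ by the assumption on $R_e$, and each $f_d$ has integer coefficients, so it is enough to show each $c_{i,d} := \sum_{g \in S_d} \overline{\chi_i(g)}$ is a rational integer. This is the one substantive point, via a standard Galois argument: $c_{i,d}$ is an algebraic integer, being a sum of character values; and for $\sigma_k \in \Gal\bigl(\Q(\zeta_{\ordG{G}})/\Q\bigr) \cong (\Z/\ordG{G}\Z)^{\times}$ one has $\sigma_k\bigl(\overline{\chi_i(g)}\bigr) = \overline{\chi_i(g^k)}$ (since $\sigma_k$ raises each eigenvalue of the representing matrix to the $k$th power), while $g \mapsto g^k$ permutes $S_d$ bijectively because $\gcd(k, \ordG{G}) = 1$ forces $\gcd(k, d) = 1$. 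Hence $\sigma_k(c_{i,d}) = \sum_{g \in S_d} \overline{\chi_i(g^k)} = c_{i,d}$ for all $\sigma_k$, so $c_{i,d} \in \Q$; being also an algebraic integer, $c_{i,d} \in \Z$.

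\noindent Thus each $M_i(q) \in \Z(\!(q)\!)$, with pole at $\infty$ bounded by $d_0 := \max_{g \in G}\bigl(\text{order of the pole of } R_g \text{ at } \infty\bigr) < \infty$. Writing $M_i(q) = \sum_{n \geq -d_0} m_i(n) q^n$ with $m_i(n) \in \Z$, put $V_n^G := \bigoplus_i \chi_i^{\oplus m_i(n)}$ (a finite-dimensional virtual $\C[G]$-module) and $V^G := \bigoplus_{n \geq -d_0} V_n^G$. Since the irreducible characters form an orthonormal basis of the space of class functions on $G$, Fourier inversion gives $\sum_{n} \Tr(g \mid V_n^G)\, q^n = \sum_i M_i(q)\, \chi_i(g) = R_g$ for every $g$, which is exactly the claimed form, and $V^G$ is infinite-dimensional. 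The crux of the argument is the integrality of $c_{i,d}$ --- that the sum of the values of a character over all elements of a fixed order is a rational integer; the whole point of taking $R_g$ of the shape $R_e + \ordG{G}\, f_{\ord(g)}$ is to make this the only integrality input needed, everything else being bookkeeping with Schur orthogonality and with the definition of level.
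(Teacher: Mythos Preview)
Your proof is correct and follows essentially the same route as the paper's: define $R_g = R_e + \ordG{G}\,f_{\ord(g)}$, expand the multiplicity generating functions via Schur orthogonality so that the $1/\ordG{G}$ is absorbed, and then use the Galois action $\sigma_k \colon \chi_i(g) \mapsto \chi_i(g^k)$ together with $\ord(g^k)=\ord(g)$ for $(k,\ordG{G})=1$ to conclude integrality. Your version is a bit more explicit than the paper's --- you supply a concrete $f_N$, verify strictness of the level of $R_g$, isolate the integers $c_{i,d}=\sum_{\ord(g)=d}\overline{\chi_i(g)}$ rather than treating the whole sum at once, and spell out the reconstruction of the module from the $M_i$ --- but these are elaborations of the same argument rather than a different approach.
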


\subsection{Multiplicity Generating Functions}\label{ssc:multiplicity_gen_fns}

Fix a finite group $G$. Let $\chi_1,\ldots, \chi_n$ be all irreducible characters of $G$, with $\chi_1=\bbone$ the trivial character. The Schur orthogonality relations, as described in Chapter 18 of \cite{DF}, allow us to produce a multiplicity generating function for an infinite-dimensional graded $\C[G]$-module $V = \bigoplus_{n \gg -\infty}^\infty V_n^G$. If the $q$-graded trace for an element $g$ acting on $V$ is given by $R_g$ then by Schur orthogonality
\begin{align}\label{eq:mult_gen_fun}
F_i(q) &:= \sum_{n \gg - \infty} \mult_i(n) q^n =  \sum_{n \gg - \infty} \frac{1}{\ordG{G}}\sum_{g \in G} \overline{\chi_i(g)} \Tr(g |  V_n^G) q^n =\frac{1}{\ordG{G}}\sum_{g\in G}  \overline{\chi_i(g)}R_g(q)
\end{align}
is a Laurent series where $\mult_i(n)$ is the number of copies in $V_n^G$ of the irreducible representation with character $\chi_i$. Further, if each of the $R_g(q)$ is invariant under subgroups $\Gamma_g \leq \SL_2{(\Z)}$ then $F_i(q)$ will be invariant under a group containing their intersection $\bigcap_{g\in G} \Gamma_g$. 

The formula \eqref{eq:mult_gen_fun} for different $i$ can be nicely packaged in bulk quantities using the character table of $G$. Let $X$ be the matrix whose rows are indexed by irreducible characters $\chi_i$ and columns are indexed by conjugacy classes $[g_j]$, and the $(i,j)$ entry of $X$ is given by $\# [g_j]\overline{\chi_i(g_j)}$, where $\#[g]$ is the size of a conjugacy class. Then it follows that
\begin{equation}\label{eq:multiplicities_vector}
\begin{pmatrix}F_1(q)\\ \vdots \\ F_n(q)\end{pmatrix} = \frac{1}{\ordG{G}} X \begin{pmatrix}R_{g_1}\\ \vdots \\ R_{g_n}\end{pmatrix}.
\end{equation}

\noin To prove Lemma \ref{lem:pre_moon}, we must only show that the multiplicity generating functions given by \eqref{eq:mult_gen_fun} have integral coefficients.

\begin{proof}[Proof of Lemma \ref{lem:pre_moon}]
Define $R_e$ to be a Laurent series over $\Z$ where $R_e$ has leading term $q^{\d}$, and for $\ord(g)\neq 1$ define $$R_g := R_e + \ordG{G} f_{\ord(g)},$$ where $f_N \in M_0^!(N)$. If $\{\chi_1, \ldots, \chi_n\}$ are the irreducible characters of $G$, with $\chi_1$ the identity character, then $F_i(q)$ (the $i\tth$ multiplicity generating function) is given by 
\begin{align*}
F_i(q) &= \frac{1}{\ordG{G}} \sum_{g\in G} \overline{\chi_i(g)}R_g
&= \frac{1}{\ordG{G}} R_e \cdot \sum_{g\in G}\overline{\chi_i(g)} + \sum_{g\in G} \overline{\chi_i(g)}f_{\ord(g)} 
&= \begin{dcases} R_e + \overline{\sum_{g\in G} \chi_i(g)f_{\ord(g)}} & i = 1
\\ 0 + \overline{\sum_{g\in G} \chi_i(g)f_{\ord(g)}} & i \neq 1.
\end{dcases}
\end{align*}
Note that the $\chi_i(g)$ are in the ring of integers of $\Q(\xi_{\ordG{G}})$ where $\xi_{\ordG{G}}$ is a $\ordG{G}\tth$ root of unity. Furthermore,
$$\Gal(\Q(\xi_{\ordG{G}})/\Q) \simeq (\Z/\ordG{G}\Z)^\times$$ and $l\in (\Z/\ordG{G}\Z)^\times$ acts on the character table by the $l\tth$ power map in the columns. But $(l,\ordG{G}) = 1$ which means $(l,\ord(g)) = 1$ for all $g\in G$ so that $\ord(g^l) = \ord(g)$ and thus the coefficients of $F_i(q)$ are fixed by $l$, so they are rational algebraic integers and hence integers.
\end{proof}
\begin{remark}
This proof actually allows two conjugacy classes $[g_1]$ and $[g_2]$ with $\ord(g_1) = \ord(g_2)$ to have different associated $R_{g_i}$. What is actually required is that two conjugacy classes must have the same function $R_{g_i}$ if they lie in the same orbit of $\Gal(\Q(\xi_{\ordG{G}})/\Q)$ acting on the columns of the character table. 
\end{remark}

\section{Proof of Main Results}\label{sec:pf_thms}
\noin We prove Theorem \ref{thm:moonshine_always}, moonshine for all finite groups, in Section \ref{ssc:prf_1.1}; Theorem \ref{thm:order_bounds}, which bounds the power of primes which may divide the order of a group which has a moonshine module where the graded trace functions are Hauptmoduln for genus zero congruence subgroups, in Section \ref{ssc:order_bounds}; and Theorem \ref{thm:abelian_moonshine}, which classifies the finite abelian groups that have moonshine where the graded trace functions are Hauptmoduln for genus zero congruence subgroups, in Section \ref{ssc:prf_thm_1.3}.
\subsection{Proof of Theorem \ref{thm:moonshine_always}}\label{ssc:prf_1.1}
To prove Theorem \ref{thm:moonshine_always}, we prove the following theorem which provides modules with only finitely many virtual components. We will then make use of the Hecke algebra to get modules with no virtual components.
\begin{theorem}\label{thm:asymp_moonshine}
Let $G$ be a finite group. Then there exists a positive integer $\d$ and an infinite-dimensional graded virtual $\C[G]$-module $$V^G = \bigoplus_{n \in \{-\d\} \cup \N} V_n^G$$ such that for all $g \in G$, the McKay-Thompson series $\Tr(g | V^G)$ is a modular function strictly of level $\ord(g)$, and such that the homogeneous components $V_n^G$ are virtual modules for only finitely many $n$. 
\end{theorem}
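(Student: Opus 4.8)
The plan is to build $V^G$ by specifying its McKay-Thompson series $R_g$ explicitly and then invoking Lemma \ref{lem:pre_moon} to guarantee integrality of the multiplicities, together with the asymptotics of Section \ref{ssc:asymp} to guarantee that only finitely many homogeneous components are virtual. Following the template of Lemma \ref{lem:pre_moon}, I would set $R_e$ to be a suitable weakly holomorphic modular function of level $1$ with a pole of order $\d$ at infinity — the natural choice being $J(\tau) \mid \d T(\d)$, which by Lemma \ref{lem:hecke_coefs} has $q$-expansion $q^{-\d} + \sum_{n \geq 1} c_e(\d,n) q^n$ — and for each $g$ with $\ord(g) = N \neq 1$ set $R_g := R_e + \ordG{G} f_N$, where $f_N \in M_0^!(N)$ is a fixed modular function strictly of level $N$ with nonnegative integer coefficients. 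A convenient explicit choice is a power of $\Delta(N\tau)/\Delta(\tau)$, or a constant multiple thereof, since this is strictly of level $N$, holomorphic away from infinity, and has the clean asymptotics recorded in Lemma \ref{lem:delta_asym}. Lemma \ref{lem:pre_moon} (via the Galois-descent argument on cyclotomic integers) then immediately tells us that the multiplicity generating functions $F_i(q)$ of \eqref{eq:mult_gen_fun} have integer coefficients, so $V^G = \bigoplus_n V_n^G$ is a well-defined graded virtual $\C[G]$-module, and each $\Tr(g|V^G) = R_g$ is by construction a modular function strictly of level $\ord(g)$.

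It remains to show that $\mult_i(n) \geq 0$ for all but finitely many $n$ and every $i$. From the explicit formula in the proof of Lemma \ref{lem:pre_moon}, $F_1(q) = R_e + \overline{\sum_{g} \chi_1(g) f_{\ord(g)}}$ and $F_i(q) = \overline{\sum_{g} \chi_i(g) f_{\ord(g)}}$ for $i \neq 1$. Grouping by order and writing $N_{\max}$ for the largest element order in $G$, the dominant term of each $F_i(q)$ as $n \to \infty$ comes from the $f_N$ with $N = N_{\max}$ (for $i=1$, also possibly from $R_e$, depending on the choice of $\d$): by Lemma \ref{lem:delta_asym} the coefficients of $(\Delta(N\tau)/\Delta(\tau))^t$ grow like a positive constant times $n^{-3/4}\exp(4\pi\sqrt{(N-1)t \, n / N})$, and this rate is strictly increasing in $N$. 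Hence if I choose the same exponent $t$ in every $f_N$ and choose $\d$ small enough that $\sqrt{\d}$ does not dominate $\sqrt{(N_{\max}-1)t/N_{\max}}$ (e.g. $\d = 1$, or more safely $\d$ bounded in terms of $t$), the leading asymptotics of $F_i(q)$ for each $i$ is governed by $\pm\overline{\left(\sum_{\ord(g) = N_{\max}} \chi_i(g)\right)} \cdot (\text{coefficient of } f_{N_{\max}})$. The key point is that this leading coefficient is a \emph{positive real number}: $\sum_{\ord(g)=N_{\max}} \chi_i(g)$ need not be positive in general, so instead I would pre-multiply each $f_N$ by a large enough positive integer constant $C$ and add a further correction so that the \emph{combined} coefficient of the dominant exponential is positive for every $i$ — concretely, replace $f_N$ by $C f_N$ where $C$ absorbs the (bounded) character sums, or more cleanly choose the $f_N$ for distinct $N$ to have slightly different exponents $t_N$ so that a single conjugacy class of maximal-order elements with a controlled character sum dominates. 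Once the dominant exponential has a positive coefficient in every $F_i$, Ingham's theorem (Theorem \ref{thm:taub}) or directly Lemma \ref{lem:delta_asym} gives $\mult_i(n) \to +\infty$, so $\mult_i(n) \geq 0$ for $n$ large; the finitely many small $n$ are then the only possible virtual components, as claimed. Finally, since \eqref{eq:genus_zero_N}-type constraints are irrelevant here (we are not requiring Hauptmoduln), the construction goes through for every finite group.

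The main obstacle is the sign control just described: ensuring that for \emph{every} irreducible character $\chi_i$ — including those for which the relevant character sum over maximal-order elements is negative or complex — the coefficient of the genuinely dominant exponential term in $F_i(q)$ is a positive real. The cleanest route is to isolate a single conjugacy class $[g_0]$ whose order $N_0$ is strictly larger than every other element order appearing with a nonzero contribution (achieved by assigning $f_{N_0}$ a strictly larger growth exponent $t_{N_0}$ than all other $f_N$), so that $F_i(q)$ is asymptotic to $\tfrac{\#[g_0]}{\ordG{G}}\overline{\chi_i(g_0)}$ times a fixed positive function; but $\overline{\chi_i(g_0)}$ can still vanish or be non-positive. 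This is handled by instead summing over an entire Galois orbit of such classes and adding a large multiple of the regular-representation contribution (i.e. a large multiple of a level-$N_0$ form distributed so that $\sum_i$ weighting is strictly positive), or equivalently by adding to every $R_g$ a common large positive multiple of a fixed $f_{N_0}$, which shifts only $F_1$ and leaves the $i \neq 1$ balance to be dominated by a genuinely different, carefully chosen second scale. Verifying that such a two-scale (or finitely-many-scale) choice simultaneously makes all $n$ of $F_i(q)$ eventually nonnegative, while keeping each $R_g$ strictly of level $\ord(g)$, is the technical heart of the argument; everything else is bookkeeping with the asymptotics of Lemmas \ref{lem:hecke_coefs}, \ref{lem:haupt_coeefs}, and \ref{lem:delta_asym}.
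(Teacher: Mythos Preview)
Your setup is right --- Lemma \ref{lem:pre_moon} handles integrality, and the remaining work is sign control of the asymptotics --- but the approach you settle on, making one $f_N$ (or one scale) strictly dominate the others, runs into exactly the obstruction you describe: the leading coefficient of $F_i$ becomes a real multiple of $\sum_{\ord(g)=N_0}\overline{\chi_i(g)}$, which can be zero or negative. None of your proposed patches actually fixes this. Multiplying all $f_N$ by a constant $C$ scales every $F_i$ by $C$ and changes no signs. Adding the same function to every $R_g$ shifts only $F_1$ and leaves the nontrivial $F_i$ untouched, so you are back to needing those leading sums to be positive on their own. A ``two-scale'' hierarchy just pushes the same problem to the second scale.

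The missing idea is to choose the exponents so that \emph{every} $f_N$ has the \emph{same} asymptotic growth rate, not a hierarchy of rates. Set $h=\lcm\{\ord(g)-1:g\in G,\,g\neq e\}$ and $t_N=\tfrac{N}{N-1}h\in\N$; then $\tfrac{N-1}{N}t_N=h$ is independent of $N$, so by Lemma \ref{lem:delta_asym} the $n\tth$ coefficient of $N^{12t_N}(\Delta(N\tau)/\Delta(\tau))^{t_N}$ is asymptotic to $\tfrac{h^{1/4}}{\sqrt{2}\,n^{3/4}}\exp(4\pi\sqrt{hn})$ for every $N$. Now take $R_g=R_e-\ordG{G}\cdot N^{12t_N}(\Delta(N\tau)/\Delta(\tau))^{t_N}$ for $\ord(g)=N>1$ (note the \emph{minus} sign). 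For a nontrivial $\chi_i$ the $R_e$ contribution to $F_i$ vanishes by orthogonality, and what remains is
\[
\mult_i(n)\ \sim\ \Bigl(-\sum_{g\neq e}\overline{\chi_i(g)}\Bigr)\cdot\frac{h^{1/4}}{\sqrt{2}\,n^{3/4}}\exp\bigl(4\pi\sqrt{hn}\bigr)
\ =\ \dim(\chi_i)\cdot\frac{h^{1/4}}{\sqrt{2}\,n^{3/4}}\exp\bigl(4\pi\sqrt{hn}\bigr),
\]
using $\sum_{g\in G}\overline{\chi_i(g)}=0$. The coefficient is automatically a positive integer for every $i\neq 1$, with no casework and no reliance on the sign of any partial character sum. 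For $i=1$ one then picks $\d>h$ in $R_e=J\mid \d T(\d)$ so that $c_e(\d,n)\sim\exp(4\pi\sqrt{\d n})$ outgrows the common rate $\exp(4\pi\sqrt{hn})$; this is the only role $\d$ plays.

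So the genuine gap is precisely the sign-control step: a single dominant scale cannot work, and the resolution is not a multi-scale hierarchy but a deliberate \emph{collapse} of all scales to one common rate, which converts the uncontrolled sums $\sum_{\ord(g)=N_0}\overline{\chi_i(g)}$ into the trivially positive $\dim(\chi_i)$.
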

\begin{remark}
Theorem \ref{thm:asymp_moonshine} may be thought of as a natural generalization of the work of Atkin-Fong-Smith \cite{AFS}, with more freedom in the choice of modular functions.
\end{remark}
\begin{proof}
For $n > 1$, let $t_n = \frac{n}{n-1}h$ where $h := \lcm\{\ord(g)-1 \mid g \in G, g \neq e\}$, making $t_g := t_{\ord(g)}$ always an integer (where we assume $g \neq e$).
Let us define $R_e = J \mid T(d)$ for $d \in \N$ to be chosen later, as well as
\begin{equation}
\bar{B}_{m,t_m} := m^{12t_m} \pfrac{\Delta(m\tau)}{\Delta(\tau)}^{t_m}
\end{equation}
for $m,t_m \in \N$, and
\begin{equation*}
R_g := R_e - \ordG{G} \cdot \bar{B}_{\ord(g),t_g},
\end{equation*}
for $g \neq e$.
Define $b_{\ord(g), t_{g}}(n)$ by
$$\bar{B}_{\ord(g),t_g} =  \sum_{n=t_g(\ord(g)-1)}^\infty \bar{b}_{\ord(g), t_{g}}(n) q^n,$$ 
and since $t_g$ and $\bar{b}_{\ord(g),t_g}$ will only depend on the order of $g$ we will sometimes write $\bar{b}_{v,t_v}$ where $v=\ord(g)$.

Lemma \ref{lem:delta_asym} implies, after multiplying through by $m^{12t_m}$, that
\begin{equation}\label{eq:cgasymptotic}
\bar{b}_{\ord(g), t_{g}}(n) \sim \frac{\left(\frac{\ord(g)-1}{\ord(g)} t_g\right)^{1/4}}{\sqrt{2} n^{3/4}} \cdot \exp{4\pi \sqrt{\frac{\ord(g)-1}{\ord(g)}t_gn}}.
\end{equation}

\noin The choice of the $t_g$ gives that this asymptotic is the same for all $g \neq e$. 

Now we will show the existence of $V$ using only the assumptions that the $\bar{b}_{\ord(g), t_{g}}(n)$ are all asymptotically the same as $n \to \infty$ independent of $g$, that they are positive, and that the depth $\d$ may be chosen arbitrarily high so $R_e$ dominates asymptotically. The Schur orthogonality relations yield that the multiplicities $\mult_i(n)$ of an irreducible representation with character $\chi_i$ in homogeneous component $V_n^G$ are given by \eqref{eq:mult_gen_fun}. By Lemma \ref{lem:pre_moon}, all $\mult_i(n)$ are integral, so it suffices to show nonnegativity.

\noin Let $\chi_i \neq \chi_1$ be nontrivial. Then $\sum_{g \in G} \overline{\chi_i(g)} = 0$, so $- \sum_{g \in G \setminus e} \overline{\chi_i(g)} = \dim(\chi_i)$. Letting
$$a_r = \sum_{\substack{g \in G \\ \ord(g)=r}} \overline{\chi_i(g)}$$
we then have 
\begin{equation}\label{eq:char}
-\sum_{\substack{r \big| \ordG{G} \\ r \neq 1}} a_r  = \dim(\chi_i) > 0
\end{equation}
and
\begin{equation}\label{eq:orders'}
\frac{1}{\ordG{G}}\sum_{g \in G} \overline{\chi_i(g)} R_g = - \sum_{\substack{r \big | \ordG{G} \\ r \neq 1}} a_r R_{r}.
\end{equation}
By \eqref{eq:cgasymptotic} and \eqref{eq:char}, \eqref{eq:orders'} implies that 
\begin{equation}\label{eq:mult_asymp}
\mult_i(n) = - \sum_{\substack{r \big| \ordG{G} \\ r \neq 1}} a_r b_{r,t_r}(n) \sim \dim(\chi_i) \frac{h^{1/4}}{\sqrt{2} n^{3/4}} \exp{4\pi \sqrt{h n}}.
\end{equation}
This shows that there are at most finitely many $n$ for which $\mult_i(n)$ is negative.

Now let $\chi_i = \chi_1$ be trivial. We must show that for all $n \geq 1$, the coefficients of
\begin{equation*}
\sum_{v \big| \ordG{G}} \ell(v) R_v
\end{equation*}
are nonnegative, where $\ell(v) := \# \{g \in G \mid \ord(g) = v\}$. Equivalently, we need 
\begin{equation*}
c_e(d,n) \geq \sum_{v \big| \ordG{G}, v > 1} \ell(v) \bar{b}_{v,t_v}(n)
\end{equation*} 
where $c_{e}(d,n)$ is as in Lemma \ref{lem:hecke_coefs}. The right hand side is asymptotic to 
\begin{equation*}
(\ordG{G}-1) \frac{h^{1/4}}{\sqrt{2} n^{3/4}} \exp{4\pi \sqrt{h n}}.
\end{equation*}
 Choosing $d > h$, we have by \eqref{eq:j_asym} that $c_{e}(d,n)$ dominates 
 
 \begin{equation*}
 \sum_{v \big| \ordG{G}, v > 1} \ell(v) \bar{b}_{v,t_v}(n)
 \end{equation*}
asymptotically, completing the proof.
\end{proof}
\noin This asymptotic result may be strengthened through a bounding argument.

\begin{proof}[Proof of Theorem \ref{thm:moonshine_always}]
Let $\hat{V}^G$ be the graded module guaranteed by Theorem \ref{thm:asymp_moonshine}, $\ell$ be its depth, and $R_g = \Tr(g | \hat{V}^G)$ be the corresponding McKay-Thompson series for $g \in G$. There are finitely many of these, which we will abuse notation and number as $R_1,\ldots,R_m$ where $m$ denotes the number of conjugacy classes of $G$ and these $R_i$ may be the same on conjugacy classes of the same order. We write $R_1=R_e$ and

\begin{equation*}
R_i = \sum_{n \gg -\infty} c_i(n)q^n
\end{equation*}
(note that $c_1(n) = c_e(\ell,n)$). For each character $\chi_i$, we have that the multiplicity $\mult_i(n)$ of the corresponding representation in the graded component $V_n^G$ is given by a linear function $L_i(c_1(n),\ldots,c_m(n))$. By Theorem \ref{thm:asymp_moonshine}, each $L_i(c_1(n),\ldots,c_m(n))$ is asymptotic to a positive function of $n$, therefore 
\begin{equation*}
    B = |\inf_{\substack{n \in \Z_{>0} \\ 1 \leq i \leq m}} L_i(c_1(n),\ldots,c_m(n))|
\end{equation*}
is finite. 
By the proof of Theorem \ref{thm:asymp_moonshine}, $L_i(c_1(n),\ldots,c_m(n))$ is asymptotic to a monotonically increasing unbounded function as $n \to \infty$. Therefore for each $\chi_i$ there exists some $N_i$ such that for all $n>N_i$, $L_i(c_1(n),\ldots,c_m(n)) > B$. Set $N' = \max_i N_i$.
\\\\
Now, for a prime $p$ coprime to $\ell$ we have 
\begin{align*}
R_i \mid pT(p) &= q^{-\ell p} + \sum_{n \geq 1} (pc_i(pn) + c_i(n/p))q^n,
\end{align*}
where $c_i(n/p) = 0$ when $p \nmid n$. In particular, any congruence mod $\ordG{G}$ satisfied by all coefficients of the $R_i$ is also satisfied by all coefficients of the $R_i \mid pT(p)$, so there is a possibly virtual module $V^G$ with graded traces $R_i \mid pT(p)$.
Fix $p>N'$; then for all $\chi_i$ and $n > 0$,
\begin{align*}
L_i(pc_1(pn)+c_1(n/p),\ldots,pc_m(pn)+c_m(n/p)) &= p L_i(c_1(pn),\ldots,c_m(pn)) + L_i(c_1(n/p),\ldots,c_m(n/p))\\
&\geq p L_i(c_1(pn),\ldots,c_m(pn)) - B \\
&\geq 0
\end{align*}
since $pn \geq p > N'$. For $n \leq 0$, the only nonzero coefficient of any $R_i \mid pT(p)$ is a $1$ in front of $q^{-p\ell}$. Therefore the coefficient in front of $q^{-p\ell}$ in each $L_i$ is $\ordG{G}$ for $\chi_i = \bbone$ and $0$ for $\chi_i$ nontrivial, so it is always nonnegative. This shows that the multiplicities of each irreducible representation in $V^G$ are nonnegative. Choosing $p$ also coprime to $\ordG{G}$, we have that $pT(p)$ does not affect the level of any $R_i$, so since the McKay-Thompson series of $\hat{V}^G$ are on the required level we have that those of $V^G$ are as well.
\end{proof}

\noin Note that applying $p T(p)$ to $R_{i}$ for any $p > N'$ results in a moonshine module of depth $p \ell$ with no negative multiplicities. Thus, we have infinitely many such modules. It is not clear from the above proof, however, if for a given group $G$ a moonshine module of \emph{every} depth greater than some specified depth $d$ necessarily exists, as $T_{e} = J \mid (\ell p) T(\ell p)$ for some $\ell$ as chosen in the proof of Theorem \ref{thm:asymp_moonshine} and some $p > N'$ as chosen in the proof above. Thus, we could imagine some very large prime $p' > h$ such that moonshine of this depth is not readily apparent from the above proof. A natural question is whether further work could provide insight on what depths of moonshine are possible for a given group $G$, especially establishing its (minimal) depth. Remark \ref{rmk:tensor_power_of_V_natural} shows that we can always bound the pole of $T_e(\tau)$ as $ i\tau \to \infty$ to have order $\le \ordG{G}$, although the graded dimension in this case is $J(\tau)^{\ordG{G}}$ and not $J \mid mT(m)$.

\subsection{Proof of Theorem \ref{thm:order_bounds}} \label{ssc:order_bounds}

Recall the notation from the statement of Theorem \ref{thm:abelian_moonshine} that $T_{\ord(g)}$ is the Hauptmodul for $\Gamma_{0}(\ord(g))$, and set
$$ T_{\ord(g)} := \sum_{n = -1}^{\infty} c_{g}(1,n) q^n.$$
We use this notation for consistency with Lemma \ref{lem:hecke_coefs}, which gives asymptotics for $c_{e}(1,n)$ (and $c_{e}(m,n)$ generally). All groups considered in this Section must, by the hypotheses of Theorems \ref{thm:order_bounds} and \ref{thm:abelian_moonshine}, have only elements such that $\Gamma_{0}(\ord(g))$ is genus-zero, so we may speak of its Hauptmodul in all cases.

\begin{proof}[Proof of Theorem \ref{thm:order_bounds}]
Let $G$ be a finite group satisfying the hypotheses. For $g \in G$, $\Gamma_0(\ord(g))$ must have genus $0$ and hence we must have $\ord(g) \in \{1,2,\ldots,10,12,13,16,18,25\}$. The primes dividing elements of this set are $2,3,5,7$ and $13$; if another prime $p$ divides $\ordG{G}$, then by the first Sylow theorem, $G$ contains a $p$-group, which contradicts the hypothesis that $\Gamma_0(\ord(g))$ always has genus $0$. Hence $\ordG{G}$ is divisible only by $2,3,5,7$ and $13$.
\\\\
\noin By the Schur orthogonality argument of Section \ref{ssc:multiplicity_gen_fns}, integrality of the multiplicities of an irreducible representation in each homogeneous component is equivalent to certain congruences between the functions $T_1,T_2,\ldots,T_{25} \pmod{\ordG{G}}$. In particular, the integrality of the multiplicities of the trivial character $\chi_1$ is equivalent to a congruence modulo $\ordG{G}$ where $T_1$ has coefficient $1$, which is equivalent to a set of congruences of the form
\begin{equation}\label{eq:1congruence}
T_1 + a_2 T_2 + \ldots + a_{25} T_{25} \equiv 0 \pmod{p^{\ord_p(\ordG{G})}},
\end{equation}
for $p \in \{2,3,5,7,13\}$, where $a_i = \ell(i) =  \# \{g \in G \mid \ord(g) = i\}$. Therefore, to obtain a bound on the highest power of $p$ which may divide $\ordG{G}$, it suffices to show that there do not exist any congruences of the form\footnote{It is necessary to consider congruences of this form because there will always be congruences among $T_1,T_2,\ldots,T_{25}$ modulo $p^N$ given by multiplying a congruence modulo a lower power of $p$ by the appropriate power of $p$; however, mandating that the first coefficient is $1$ excludes such cases.} of \eqref{eq:1congruence} modulo $p^N$ for some $N$, for then no such congruences can exist modulo $p^n$ for $n \geq N$.

The final part of the proof consists in computationally finding all congruences between $T_1,T_2,\ldots,T_{25}$ modulo powers of $p \in \{2,3,5,7,13\}$. This was carried out with Sage\footnote{Code available at \url{https://github.com/nvafa/moonshine-congruences}.} \cite{sage} by computing the left kernel of the matrix
\begin{equation}
\begin{pmatrix}
c_1(1,-1) & c_1(1,1) & \ldots & c_1(1,N) \\
c_2(1,-1) & c_2(1,1) & \ldots & c_2(1,N) \\
\vdots & \vdots & \ddots & \vdots \\
c_{25}(1,-1) & c_{25}(1,1) & \ldots & c_{25}(1,N) \\
\end{pmatrix}
\end{equation}
We note that, since we are looking for the lowest power of $p$ for which there is \emph{not} a congruence of the form in \eqref{eq:1congruence}, one may choose any $N$ (at the risk of possibly getting a worse bound); however, $N$ was chosen based on Lemma \ref{lem:sturm} (Sturm bound) so that the congruences obtained below are guaranteed to hold for all coefficients of $T_1,T_2,\ldots,T_{25}$. The congruences obtained are shown below, where the power of $p$ is the largest such that a congruence of the form in \eqref{eq:1congruence} exists:
\begin{align*}
0 & \equiv T_1 + 7940351 T_2 + 22091520 T_4 + 4308992 T_8 + 32768000 T_{16} & \pmod{2^{25}} \\
0 & \equiv T_1 + 527795 T_3 + 3645 T_9 & \pmod{3^{12}} \\
0 & \equiv T_1 + 13124 T_5 + 2500 T_{25} & \pmod{5^{6}} \\
0 & \equiv T_1 - T_7 & \pmod{7^4} \\
0 & \equiv T_1 - T_{13} & \pmod{13^2}.
\end{align*}
This immediately yields the desired bounds.
\end{proof}

\begin{remark}
While the above proof is by casework, the fact that the uniform bound $\ord_p(\ordG{G}) \leq \frac{24}{p-1}$ holds for all odd primes is suggestive. An explicit construction is needed to show whether the bound is sharp, and it is hoped that further restrictions, coming from either integrality or nonnegativity of the multiplicities, force the sharper bound $\ord_2(\ordG{G}) \leq 24 = \frac{24}{2-1}$ so that the formula $\ord_p(\ordG{G}) \leq \frac{24}{p-1}$ holds for all primes.
\end{remark}

\noin For abelian groups, it is possible to extend the techniques of the previous proof to obtain explicit formulas for the multiplicity generating functions in terms of the group, allowing a complete classification of abelian groups with (possibly virtual) moonshine. Further analysis shows that the virtual modules obtained are actually modules in all but a few cases.

\subsection{Proof of Theorem \ref{thm:abelian_moonshine}} \label{ssc:prf_thm_1.3}

We begin with several lemmas regarding moonshine for abelian groups. First, in Lemma \ref{lem:raw_congs}, we provide a concrete description of the congruences that must be satisfied amongst Hauptmoduln in the case of finite abelian groups. Recall that the \emph{conductor} of a character $\chi$ on $\Z/m\Z$ is the smallest $m'$ such that $\chi$ factors through $\Z/m'\Z$. Given a character $\chi$ on an abelian group $G$ and a prime $p \big| \ordG{G}$, define the \emph{$p$-max conductor} of $\chi$ to be the largest power $p^\ell$ of $p$ such that $\chi$ restricted to some $\Z/p^n\Z$ summand of $G$ has conductor $p^\ell$. The usefulness of this notion lies in the fact that the multiplicity generating series of a character on an abelian group $G$ is entirely determined by its $p$-max conductors for each prime dividing $\ordG{G}$. The lemma below explicitly computes, for any abelian group character, the $q$-series which will be equal to its multiplicity generating function if there is a moonshine module with graded traces equal to the appropriate Hauptmoduln. Therefore the problem of existence of such a module is reduced to checking whether the Fourier coefficients of this function are nonnegative integers.

\begin{lemma}\label{lem:raw_congs}
Let $A = \{p_1,\ldots,p_k\}$ be a finite set of primes, $G = \prod_{p \in A} \prod_{j=1}^{h_p} \left(\Z/p^j\Z\right)^{r_{p,j}}$ a finite abelian group, $\chi$ an irreducible character on $G$ with $p$-max conductor $n_p$ for $p \in A$. Furthermore, define 
\begin{equation*}
\num_p(n,t) := \begin{cases}
0 & \text{ if } t>n \\
-\pi_p(t) & \text{ if } t = n \\
\pi_p(t+1)-\pi_p(t) & \text{ if } t < n,
\end{cases}
\end{equation*}
where
\begin{equation*}
\pi_p(t) := p^{r_{p,1}+2r_{p,2}+\ldots+(t-1)r_{p,t-1}+\ldots+(t-1)r_{p,h_p}}
\end{equation*}
for $t>1$ and by convention $\pi_p(1)=1$ and $\pi_p(0)=0$. Then 
\begin{equation}\label{eq:explicit_form}
\frac{1}{\ordG{G}}\sum_{g\in G}  \overline{\chi(g)}T_{\ord(g)} = \frac{1}{\ordG{G}} \sum_{\mathbf{t} \in \Z_{\geq 0}^{k}} \left(\prod_{p \in A} \num_p(n_p,t_p) \right) T_{\mathbf{t}}
\end{equation}
where $T_{\mathbf{t}} := T_{\prod_{p \in A} p^{t_p}}$.
\end{lemma}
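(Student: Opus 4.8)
The plan is to reduce \eqref{eq:explicit_form} to a one-prime-at-a-time computation and then evaluate the resulting character sums by orthogonality over torsion subgroups. Write $G = \prod_{p \in A} G_p$ with $G_p = \prod_{j=1}^{h_p}(\Z/p^j\Z)^{r_{p,j}}$ the Sylow $p$-subgroup. Each $g \in G$ is a tuple $(g_p)_{p \in A}$, the character factors as $\chi(g) = \prod_{p} \chi_p(g_p)$ where $\chi_p := \chi|_{G_p}$, and $\ord(g) = \prod_p \ord(g_p)$ with $\ord(g_p) = p^{t_p(g_p)}$ for a unique exponent $t_p(g_p) \ge 0$, since $G_p$ is a $p$-group. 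Hence $T_{\ord(g)} = T_{\mathbf{t}}$ for $\mathbf{t} = (t_p(g_p))_p$, and grouping $\sum_{g \in G} \overline{\chi(g)}\, T_{\ord(g)}$ according to the value of this tuple — the relevant subset of $G$ being the direct product $\prod_{p \in A} \{g_p \in G_p : \ord(g_p) = p^{t_p}\}$ — turns the sum into $\sum_{\mathbf{t} \in \Z_{\ge 0}^{k}} \bigl(\prod_{p \in A} S^{(p)}_{t_p}\bigr)\, T_{\mathbf{t}}$, where $S^{(p)}_t := \sum_{g_p \in G_p,\ \ord(g_p) = p^t} \overline{\chi_p(g_p)}$. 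Dividing by $|G| = \prod_p |G_p|$, it therefore suffices to prove the local identity $S^{(p)}_t = \num_p(n_p, t)$ for every $p \in A$ and every $t \ge 0$.

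Fix $p$ and let $G_p[p^s]$ denote the $p^s$-torsion subgroup of $G_p$. An element of $G_p$ has order $p^t$ precisely when it lies in $G_p[p^t] \setminus G_p[p^{t-1}]$, so $S^{(p)}_t$ telescopes: $S^{(p)}_t = \sigma_t - \sigma_{t-1}$, where $\sigma_s := \sum_{g \in G_p[p^s]} \overline{\chi_p(g)}$ and $\sigma_{-1} := 0$ (an empty sum, matching the convention $\pi_p(0) = 0$). Orthogonality of characters on the finite abelian group $G_p[p^s]$ gives $\sigma_s = |G_p[p^s]|$ if $\chi_p$ restricts trivially to $G_p[p^s]$ and $\sigma_s = 0$ otherwise; and since $(\Z/p^j\Z)[p^s] = p^{j-\min(j,s)}\Z/p^j\Z$ is cyclic of order $p^{\min(j,s)}$, we get $|G_p[p^s]| = p^{\sum_j \min(j,s)\,r_{p,j}} = \pi_p(s+1)$, the exponent being exactly the one in the definition of $\pi_p$.

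It remains to decide for which $s$ the restriction $\chi_p|_{G_p[p^s]}$ is trivial. Decomposing $\chi_p = \prod_{j,i}\psi_{j,i}$ over the cyclic summands and using $(\Z/p^j\Z)[p^s] = p^{j-\min(j,s)}\Z/p^j\Z$, one checks that $\psi_{j,i}$ is trivial on $(\Z/p^j\Z)[p^s]$ exactly when $s$ does not exceed a bound read off from the conductor of $\psi_{j,i}$; taking the most restrictive summand and unwinding the definition of the $p$-max conductor shows that $\chi_p|_{G_p[p^s]}$ is trivial if and only if $s < n_p$ (with $n_p$ taken to be $\infty$, so $\num_p(\infty,t) := \pi_p(t+1)-\pi_p(t)$, in the degenerate case that $\chi_p$ is trivial, where the same argument applies verbatim). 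Thus $\sigma_s = \pi_p(s+1)$ for $0 \le s < n_p$ and $\sigma_s = 0$ for $s \ge n_p$, so $S^{(p)}_t = \sigma_t - \sigma_{t-1}$ equals $\pi_p(t+1) - \pi_p(t)$ when $t < n_p$, equals $-\pi_p(n_p) = -\pi_p(t)$ when $t = n_p$, and equals $0$ when $t > n_p$ — which is exactly $\num_p(n_p, t)$, finishing the proof. The main obstacle is precisely this last step: correctly matching ``triviality of $\chi_p$ on the $p^s$-torsion'' with the combinatorial notion of $p$-max conductor, i.e.\ the bookkeeping with the conductors of the characters obtained by restricting $\chi_p$ to the cyclic summands of $G_p$; the rest is orthogonality of characters and counting of torsion.
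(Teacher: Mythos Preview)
Your approach---factor over Sylow subgroups, telescope $S^{(p)}_t = \sigma_t - \sigma_{t-1}$ along the torsion filtration, and evaluate each $\sigma_s$ by orthogonality on the subgroup $G_p[p^s]$---is exactly the paper's strategy, just phrased more cleanly in terms of $G_p[p^s]$ rather than explicit coordinate sums. Through the identity $\sigma_s = |G_p[p^s]| = \pi_p(s+1)$ when $\chi_p|_{G_p[p^s]}$ is trivial and $\sigma_s = 0$ otherwise, everything is correct and matches the paper line for line.

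The genuine gap is precisely where you flag it, and it is not merely a bookkeeping nuisance: the assertion that $\chi_p|_{G_p[p^s]}$ is trivial iff $s < n_p$, with $n_p$ the $p$-max conductor exponent, is false. For a single summand $\Z/p^j\Z$ and $\psi$ of conductor $p^c$ one has $\ker\psi = p^c\Z/p^j\Z$ and $(\Z/p^j\Z)[p^s] = p^{\max(j-s,0)}\Z/p^j\Z$, so $\psi$ is trivial on the $p^s$-torsion iff $s \le j-c$; thus the correct threshold is $\min_{(j,i):\,c_{j,i}\ge 1}(j-c_{j,i}+1)$, not $\max_{(j,i)} c_{j,i}$. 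Concretely, take $G = \Z/2\Z \times \Z/4\Z$: the character with $\psi_1$ trivial and $\psi_2$ of conductor $2$ has $p$-max conductor $2^1$ and gives $\sum_g \overline{\chi(g)}\,T_{\ord(g)} = T_1 + 3T_2 - 4T_4$, while the character with $\psi_1$ nontrivial and $\psi_2$ trivial also has $p$-max conductor $2^1$ but gives $T_1 - T_2$. Two characters with the same $n_p$ produce different right-hand sides, so \eqref{eq:explicit_form} cannot hold with $n_p$ defined as the max conductor. The paper's own proof makes the identical unjustified jump (``Therefore, letting $p^n$ be the largest conductor of the $\chi_{i,j}$, we have \ldots''). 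If you replace $n_p$ by the torsion threshold above, your argument (and the paper's) goes through verbatim, and since that threshold still ranges over $\{1,\ldots,h_p\}$ as $\chi$ varies, the downstream use in Lemma~\ref{lem:nice_congs} is unaffected.
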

\begin{proof}
By the assumption that $\Tr(g|V^G) = T_{\ord(g)}$ and the Schur orthogonality argument of Section \ref{sec:rep_thy},  
\begin{equation*}
F_r(q) = \frac{1}{|G|}\sum_{d\big| \ordG{G}} \left(\sum_{\substack{g \in G \\ \ord(g)=d}} \chi(g)\right) T_g.
\end{equation*}
First consider the case where $G$ is a $p$-group and $d=p^t$. Because a character on a direct product of groups is just a product of characters on the individual groups, 
\begin{equation*}\label{eq:orders}
\sum_{\substack{g \in G \\ \ord(g)=p^t}} \chi(g) = \prod_{j=1}^{h_p} \prod_{i=1}^{r_{p,j}} \left(\sum_{a=0}^{p^{\text{min}(t,j)}} \chi_{i,j}(a \cdot p^{j - \text{min}(t,j)})\right) - \prod_{j=1}^{h_p} \prod_{i=1}^{r_{p,j}} \left(\sum_{a=0}^{p^{\text{min}(t-1,j)}} \chi_{i,j}(a \cdot p^{j - \text{min}(t-1,j)})\right)
\end{equation*}
where $\chi_{i,j}$ are the characters on the components $\Z/p^j\Z$ of $G$; the first product counts elements with each coordinate having order at most $p^t$, from which we subtract a product corresponding to all elements with order at most $p^{t-1}$. Each term 
\begin{equation*}
\sum_{a=0}^{p^{\text{min}(t,j)}} \chi_{i,j}(a \cdot p^{j - \text{min}(t,j)})
\end{equation*}
is equal to $p^{\text{min}(t,j)}$ if $\chi_{i,j}$ takes value $1$ on each of $0,p^{j-\text{min}(t,j)},\ldots,(p^{\text{min}(t,j)}-1)p^{j-\text{min}(t,j)}$, and $0$ otherwise. Therefore, letting $p^{n}$ be the largest conductor of the $\chi_{i,j}$, we have
\begin{equation*}
\prod_{j=1}^{h_p} \prod_{i=1}^{r_{p,j}} \left(\sum_{a=0}^{p^{\text{min}(t,j)}} \chi_{i,j}(a \cdot p^{j - \text{min}(t,j)})\right) - \prod_{j=1}^{h_p} \prod_{i=1}^{r_{p,j}} \left(\sum_{a=0}^{p^{\text{min}(t-1,j)}} \chi_{i,j}(a \cdot p^{j - \text{min}(t-1,j)})\right) = \num_p(n,t).
\end{equation*}
Thus
\begin{equation*}
F_r(q) = \sum_{t \in \Z_{\geq 0}} \num_p(n,t)T_{p^{t}}
\end{equation*}
for some $0 \leq n \leq h_p$, and this is exactly the form specified. If $G$ is a product of abelian $p$-groups for $p \in A$, then a character of $G$ is a product of characters on each $p$-group, and therefore by the same calculation the coefficient of $T_{\mathbf{t}}$ in the expansion of $F_r(q)$ is the product $\prod_{p \in A} \num_p(n_p,t_p)$ where $p^{n_p}$ is the $p$-max conductor of $G$ for each $p \in A$. 
This shows the result for all finite abelian groups.
\end{proof}

\noin Showing the existence of a graded moonshine module for $G$ is equivalent to showing that the coefficients of 
$$\sum_{\mathbf{t} \in \Z_{\geq 0}^k} \left(\prod_{p \in A} \num_p(n_p,t_p) \right) T_{\mathbf{t}}$$
are positive and $0$ modulo $\ordG{G}$, so that they are integral when dividing out by $\ordG{G}$.
The nonnegativity of the coefficients of the multiplicity generating functions $F_r$ must be shown directly,  and the following lemma shows that the congruences modulo $\ordG{G}$ are equivalent to a simpler set of congruences with all dependence on the number of $\Z/p^n\Z$ summands concentrated in the modulus. In the next lemma we refer to the set of all congruences 
$$\sum_{\mathbf{t} \in \Z_{\geq 0}^k} \left(\prod_{p \in A} \num_p(n_p,t_p) \right) T_{\mathbf{t}} \equiv 0 \pmod{\ordG{G}}$$
obtained from Lemma \ref{lem:raw_congs} as ($C1$) collectively. 

\begin{lemma} \label{lem:nice_congs}
Fix notation as in Lemma \ref{lem:raw_congs}, and for $S \subset A$ denote $T_{\prod_{p \in S} p^{n_p}}$ by $T_{S,\mathbf{n}}$. Then the set of congruences ($C1$) is satisfied if and only if, for every $P \subset A$ and $\mathbf{n}=(n_{p_1},\ldots,n_{p_k}) \in \Z^k$ with $1 \leq n_p \leq h_p$, the congruence 
\begin{equation*}
\sum_{S \subset P} (-1)^{|S|} T_{S,\mathbf{n}} \equiv 0 \bigpmod{\prod_{p \in P} \prod_{j = n_p}^{h_p} p^{r_{p,j}(j-n_p+1)}}
\end{equation*}
is satisfied. We refer to this set of congruences collectively as ($C2$). 
\end{lemma}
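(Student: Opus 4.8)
The plan is to show that the two sets of congruences ($C1$) and ($C2$) generate the same ideal of relations among the $T_d$'s, by exhibiting each family as an explicit integer-linear combination of the other. The key structural observation is that the coefficient $\prod_{p \in A} \num_p(n_p, t_p)$ appearing in Lemma \ref{lem:raw_congs} is, for each prime $p$ separately, a \emph{finite difference} of the step function $\pi_p$: indeed $\num_p(n,t) = \pi_p(t+1) - \pi_p(t)$ for $t < n$, $\num_p(n,n) = -\pi_p(n)$, and $\num_p(n,t) = 0$ for $t > n$, so that $\sum_{t \ge 0} \num_p(n,t) T_{p^t}$ telescopes. More precisely, one checks that $\sum_{t=0}^{n} \num_p(n,t)\,x^t$ can be rewritten, via Abel summation, in terms of the successive differences $x^{t} - x^{t+1}$ weighted by $\pi_p(t+1)$, and that an $n$-fold iteration collapses everything into the single ``top'' term $T_{p^{n}}$ minus lower-order corrections. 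First I would carry out this telescoping for a single prime to get the identity expressing $\sum_t \num_p(n,t)T_{p^t}$ as a $\ZZ$-combination of the $T_{p^s}$ with leading term $\pm \pi_p(n) T_{p^n}$ and all other coefficients divisible by appropriate powers of $p$; this is the one-prime case of the equivalence.

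Next I would assemble the multi-prime statement. Since the coefficient in \eqref{eq:explicit_form} factors as $\prod_{p}\num_p(n_p,t_p)$, the sum $\sum_{\mathbf t}\bigl(\prod_p \num_p(n_p,t_p)\bigr)T_{\mathbf t}$ is a tensor product of the one-prime operators applied to the array $(T_{\mathbf t})_{\mathbf t}$. Running the one-prime telescoping in each coordinate in turn, one finds that ($C1$) for a character of $p$-max conductors $\mathbf n$ is equivalent, modulo lower congruences, to the single congruence whose left side is $\sum_{S\subset P}(-1)^{|S|}T_{S,\mathbf n}$ (the inclusion–exclusion sum that survives when each $\num_p$ is replaced by its ``top'' contribution $-\pi_p(n_p)$ vs. its telescoped remainder contributing the ``$1$'' at $t_p=0$), with the modulus being the product of the $p$-parts $\pi_p(n_p)/(\text{common factor})$; a short computation identifies this modulus with $\prod_{p\in P}\prod_{j=n_p}^{h_p}p^{r_{p,j}(j-n_p+1)}$ by reading off the exponent in $\pi_p$. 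The implication ($C2$)$\Rightarrow$($C1$) is then the reverse assembly: given all the inclusion–exclusion congruences, one reconstructs each ($C1$) congruence as an integer-linear combination of them (running the telescoping backwards), using that the modulus in each ($C2$) congruence divides the relevant modulus $\ordG{G}$-part so the combination stays integral. The implication ($C1$)$\Rightarrow$($C2$) goes by induction on $|P|$ and on $\sum h_p - \sum n_p$: the ($C1$) congruence for conductor vector $\mathbf n$, combined with the ($C2$) congruences already known for larger conductor vectors (inductive hypothesis), isolates $\sum_{S\subset P}(-1)^{|S|}T_{S,\mathbf n}$ modulo the stated modulus.

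I expect the main obstacle to be purely bookkeeping: correctly tracking which power of $p$ divides each coefficient $\num_p(n,t)$ after the telescoping, and verifying that the modulus that drops out of the $k$-fold iterated difference is exactly $\prod_{p\in P}\prod_{j=n_p}^{h_p}p^{r_{p,j}(j-n_p+1)}$ rather than something off by a bounded power. Concretely, the exponent $r_{p,1}+2r_{p,2}+\cdots+(t-1)(r_{p,t-1}+\cdots+r_{p,h_p})$ defining $\pi_p(t)$ must be matched termwise against $\sum_{j=n_p}^{h_p}r_{p,j}(j-n_p+1)$; the cleanest way is to note $\pi_p(n+1)/\pi_p(n) = p^{\,r_{p,n}+\cdots+r_{p,h_p}}$ so that $\pi_p(n_p) = \prod_{m=1}^{n_p-1}p^{\,r_{p,m}+\cdots+r_{p,h_p}}$ while the surviving modulus after telescoping involves $\pi_p(h_p+1)/\pi_p(n_p)$, whose exponent is precisely $\sum_{j=n_p}^{h_p}r_{p,j}(j-n_p+1)$. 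Once this identity is pinned down, the rest is a formal manipulation of telescoping sums and inclusion–exclusion, and the equivalence of ($C1$) and ($C2$) follows by the inductive reconstruction described above.
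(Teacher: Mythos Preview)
Your proposal is correct in outline and rests on the same structural observation as the paper's proof: the coefficients $\num_p(n,t)$ are finite differences of $\pi_p$, the multi-prime sum factors as a tensor product of one-prime operators, and the modulus identity $\ordG{G}_p/\pi_p(n_p)=\prod_{j=n_p}^{h_p}p^{r_{p,j}(j-n_p+1)}$ is exactly the computation you sketch. Where you differ is in the mechanism for passing between ($C1$) and ($C2$). You plan an inductive reconstruction (on $|P|$ and on $\sum h_p-\sum n_p$), peeling off one conductor at a time via telescoping and using previously established congruences to isolate the inclusion--exclusion sum. The paper instead writes down an explicit upper-triangular change-of-basis matrix: it defines
\[
\ell_p(e,n)=\begin{cases}0&n>e\\1&n=e\\\pi_p(e)\bigl(\pi_p(n)^{-1}-\pi_p(n+1)^{-1}\bigr)&n<e\end{cases}
\]
and verifies in one direct (telescoping) computation that $\sum_{\mathbf n}\bigl(\prod_p\ell_p(e_p,n_p)\bigr)R_{\mathbf n}=\bigl(\prod_p\pi_p(e_p)\bigr)N_{P,\mathbf e}$. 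From this, ($C1$)$\Rightarrow$($C2$) follows by dividing the modulus $\ordG{G}$ by $\prod_p\pi_p(e_p)$, and ($C2$)$\Rightarrow$($C1$) follows because the matrix $(\ell_p(e,n))$ is unitriangular over $\Z$ and hence invertible there.

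Your inductive route would ultimately rediscover this matrix, but the step ``the ($C1$) congruence for $\mathbf n$ combined with the ($C2$) congruences for larger conductors isolates $\sum_S(-1)^{|S|}T_{S,\mathbf n}$'' is the place where you would have to do real work: you must check that the lower-order terms in the telescoping, after applying the inductive hypothesis, contribute with coefficients divisible by the target modulus and not merely by $\ordG{G}$. The paper's explicit identity sidesteps this entirely, so if you carry out your plan I would recommend simply writing down $\ell_p$ and verifying the identity once rather than threading the induction.
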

\begin{proof}
First assume the congruences ($C1$) hold. For each $\mathbf{n}=(n_{p_1},\ldots,n_{p_k}) \in \Z^k$ with $1 \leq n_p \leq h_p$, there exists an irreducible character on $G$ with $n_{p_i}$ as its $p_i$-max conductor for each $i$. Hence ($C1$) contains all congruences of the form 
\begin{equation*}
\sum_{\mathbf{t} \in \Z_{\geq 0}^k} \left(\prod_{p \in S} c_p(n_p,t_p) \right) T_{\mathbf{t}} \equiv 0 \pmod{\ordG{G}}
\end{equation*}
for each tuple $\mathbf{n} = (n_{p_1},\ldots,n_{p_k}) \in \Z_{\geq 1}^k$.

Fix notation
\begin{equation*}
    R_{\mathbf{n}} = \sum_{\mathbf{t} \in \Z_{\geq 0}^k} \left(\prod_{p \in A} \num_p(n_p,t_p) \right) T_{\mathbf{t}}
\end{equation*}
and for a subset $P \subset A$
\begin{equation*}
    N_{P,\textbf{e}} = \sum_{S \subset P} (-1)^{|S|} T_{S,\textbf{e}},
\end{equation*}
where $\textbf{e} = (e_{p_1},\ldots,e_{p_k}) \in \N^k$ and we take $e_{p_i} = h_{p_i}+1$ for all $p_i \not \in P$ and $1 \leq e_{p_i} \leq h_{p_i}$ for all $p_i \in P$. Additionally for $e,n \in \N$, let 
\begin{equation*}
    \ell_p(e,n) = \begin{cases}
    0 & \text{ if } n>e \\
    1 & \text{ if } n=e \\
    \pi_p(e)\left(\frac{1}{\pi_p(n)} - \frac{1}{\pi_p(n+1)}\right) & \text{ if }n<e
    \end{cases}.
\end{equation*}
Then we claim that 
\begin{equation*}
\sum_{\mathbf{n} \in \N^k} \left(\prod_{p \in A} \ell_p(e_p,n_p)\right) R_{\mathbf{n}} = \left(\prod_{p \in A} \pi_p(e_p)\right) N_{P,\textbf{e}}.
\end{equation*}
First, substituting the expression for $R_{\mathbf{n}}$ and collecting coefficients, the left hand side is equal to
\begin{align*}
\sum_{\mathbf{t} \in \Z_{\geq 0}^k} T_{\mathbf{t}} \sum_{\mathbf{t} \leq \mathbf{n} \leq \textbf{e}} \prod_{p \in A} \num_p(n_p,t_p) \ell_p(e_p,n_p) 
 =\sum_{\mathbf{t} \in \Z_{\geq 0}^k} T_{\mathbf{t}} \prod_{p \in A} \left(\sum_{t_p \leq n_p \leq e_p}  \num_p(n_p,t_p) \ell_p(e_p,n_p)\right),
\end{align*}
where the $\leq$ in the index of summation denotes the obvious partial order on $\Z^k$. Now, if $t_p=e_p$ then 
\begin{equation*}
\sum_{t_p \leq n_p \leq e_p}  \num_p(n_p,t_p) \ell_p(e_p,n_p) = \num_p(e_p,e_p)\ell_p(e_p,e_p) = \pi_p(e_p)
\end{equation*}
and if $t_p = 0$ then 
\begin{equation*}
\sum_{t_p \leq n_p \leq e_p}  \num_p(n_p,t_p) \ell_p(e_p,n_p) = \sum_{1 \leq n_p \leq e_p} \ell_p(e_p,n_p) = \pi_p(e_p)
\end{equation*}
since $\pi_p(1)=1$ and the series telescopes. However, if $0 \neq t_p \neq e_p$ then we have 
\begin{align*}
\sum_{t_p \leq n_p \leq e_p}  c_p(n_p,t_p) \ell_p(e_p,n_p) &= c_p(t_p,t_p)\ell_p(e_p,t_p) + \sum_{t_p + 1 \leq n_p \leq e_p}(\pi_p(t_p+1)-\pi_p(t_p))\ell_p(e_p,n_p) \\
& = -\pi_p(t_p) \cdot \pi_p(e_p)\left( \frac{1}{\pi_p(t)} - \frac{1}{\pi_p(t+1)}\right) + (\pi_p(t_p+1) - \pi_p(t_p)) \cdot \frac{\pi_p(e_p)}{\pi_p(t_p+1)} \\
& = 0.
\end{align*}
Therefore the coefficient of $T_{\mathbf{t}}$ is 
\begin{equation*}
\begin{cases}
-\pi_p(e_p) & \text{ if } t_p=e_p \\
\pi_p(e_p) & \text{ if } t_p = 0 \\
0 & \text{ otherwise}
\end{cases}
\end{equation*}
from which it follows that 
\begin{equation*}
\sum_{\mathbf{n} \in \N^k} \left(\prod_{p \in A} \ell_p(e_p,n_p)\right) R_{\mathbf{n}} = \left(\prod_{p \in A} \pi_p(e_p)\right) N_{S,\textbf{e}}
\end{equation*}
as desired. Therefore if the congruences ($C1$) are satisfied then we have 
\begin{equation*}
\left(\prod_{p \in A} \pi_p(e_p)\right) N_{S,\textbf{e}} \equiv 0 \pmod{\ordG{G}}
\end{equation*}
and dividing out by $\prod_{p \in A} \pi_p(e_p)$ we obtain the congruences ($C2$), showing one direction.
\\\\
Now suppose the congruences $(C2)$ are satisfied. Because the matrix of $\ell_p(\textbf{e},\mathbf{n})$ is upper-triangular (for a suitable ordering), has integer entries, and has $1$'s on the diagonal, it follows that the adjugate of the matrix is its inverse, and hence that it is invertible over $\Z$. Therefore the $R_{\mathbf{n}}$'s are expressible as integer linear combinations of the $\left(\prod_{p \in A} \pi_p(e_p)\right) N_{S,\textbf{e}}$'s. Hence if the latter are congruent to $0$ mod $\ordG{G}$ then the former are as well.
\end{proof}

\noin Armed with these explicit descriptions of the congruences and nonnegativity conditions which must be checked for an abelian group to have a moonshine module, we may computationally classify all such groups and thus prove Theorem \ref{thm:abelian_moonshine}.

\begin{proof}[Proof of Theorem \ref{thm:abelian_moonshine}]
To show integrality, by Lemma \ref{lem:nice_congs}, it suffices to show that 
\begin{equation}\label{eq:nice_congs}
\sum_{S \subset P} (-1)^{|S|} T_{S,\mathbf{n}} \equiv 0 \bigpmod{\prod_{p \in P} \prod_{j = n_p}^{h_p} p^{r_{p,j}(j-n_p+1)}}
\end{equation}
holds for all subsets $P \subset A$. Since the graded trace functions $T_N$ are required to be Hauptmoduln for $\Gamma_0(N)$ for which $X_0(N)$ is genus zero, we can consider only the groups with element orders contained in \eqref{eq:genus_zero_N}. Directly applying Lemma \ref{lem:nice_congs}, we can find the highest powers $n$ of each $p \in P$ for which 
\begin{equation*}
\sum_{S \subset P} (-1)^{|S|} T_{S,\mathbf{n}} \equiv 0 \pmod{p^{n}}
\end{equation*}
and use this to get bounds on the exponents $r_{p,j}$ by \eqref{eq:nice_congs}.
\\\\
We cannot verify that all coefficients of the $T_{S,\mathbf{n}}$ satisfy such a congruence, but we can apply Lemma \ref{lem:sturm} to reduce to a finite check which may be done by computer. We need to check only the first $[\text{SL}_2(\Z) : \Gamma_0(N)] = N\prod_{p \mid N} (1 + p^{-1})$, where $N$ is the least common multiple of all the levels in a given congruence. This is maximized when we consider congruences with $T_9$ and $T_2$, in which $[\text{SL}_2(\Z) : \Gamma_0(18)] = 36$. Thus, checking the first 36 coefficients of all of our congruences uniformly is sufficient for our purposes. 
Using Sage \cite{sage}, \textsc{Appendix A} gives the list of all the congruences we obtained, with the maximum possible moduli. From these congruences and Lemma \ref{lem:nice_congs}, we can instantly read off the highest rank finite abelian groups we can get, which show that the groups listed in Table \ref{table:abelian_groups} have infinite dimensional graded modules with integral multiplicities of irreducible representations.
\\\\Having verified the integrality of the $\mult_i(n)$, we show nonnegativity  to verify the existence of honest modules. By the triangle inequality we have that 
$$
    \mult_i(n) \geq \frac{1}{\ordG{G}}\left(|\overline{\chi_{i}(e)} c_{e}(1,n)|-\sum_{\substack{[g] \in G \\ [g] \neq [e]}} \#[g] |\overline{\chi_{i}(g)} c_{g}(1,n)|\right).
$$
So, to show nonnegativity of $\mult_i(n)$ it suffices to show
\begin{equation} \label{eq:tri_ineq}
    c_{e}(1,n)-\sum_{\substack{[g] \in G \\ [g] \neq [e]}} \#[g] |c_{g}(1,n)| > 0
\end{equation}
for all $n$ sufficiently large, and then check smaller $n$ by computer. 
From Lemma \ref{lem:haupt_coeefs} follow the bounds:
\begin{align} 
    c_{e}(1,n) &\geq \frac{2 \pi}{\sqrt{n}} \left( K(-1,n,1) \cdot I_{1}(4 \pi \sqrt{n}) - \sum_{c > 1} \left|\frac{K(-1,n,c)}{c}\right| \cdot \left|I_{1}\left(\frac{4 \pi \sqrt{n}}{c}\right)\right|\right)  \label{eq:lo_bnd_ce}
    \\|c_{g}(1,n)| &\leq \frac{2 \pi}{\sqrt{n}} \sum_{c > 0} \left|\frac{K(-1,n,c \ord(g))}{c \ord(g)}\right| \cdot \left|I_{1}\left(\frac{4 \pi \sqrt{n}}{c \ord(g)}\right) \right|. \label{eq:up_bnd_cg}
\end{align}
Moreover, we have the following elementary bounds:
\begin{align*}
    \left|I_{1}(x) \right| &< \frac{x}{2} e^{\frac{x^2}{4}} & (0 < x)
    \\\left|I_{1}(x)\right| &< \frac{2x}{3} & (0 < x < 1)
    \\\left|\frac{K(-1,n,c)}{c}\right| &< \frac{2}{c^\frac{1}{4}}.
\end{align*}
Applying these elementary bounds to \eqref{eq:lo_bnd_ce} gives
$$
    c_e(1,n) \geq \frac{2 \pi}{\sqrt{n}} \left(I_{1}(4 \pi \sqrt{n}) - \sum_{c  = 2}^{\ceil{4 \pi \sqrt{n}}} \frac{2}{c^\frac{1}{4}} I_{1}\left(\frac{4 \pi \sqrt{n}}{c}\right) - \sum_{c = \ceil{4 \pi \sqrt{n}} + 1}^{\infty}\frac{16 \pi \sqrt{n}}{3 c^{5/4}} \right),
$$
which simplifies to
\begin{equation}\label{eq:j_lower}
    c_{e}(1,n) \geq \frac{2 \pi}{\sqrt{n}} I_{1}(4 \pi \sqrt{n}) - 8\cdot2^{\frac{3}{4}} \pi^{2} I_{1}(2 \pi \sqrt{n}) - \frac{64 \sqrt{2} \pi^{\frac{7}{4}}}{3 n^{\frac{1}{8}}}.  
\end{equation}
Applying these elementary bounds to \eqref{eq:up_bnd_cg} gives
$$
    |c_{g}(1,n)| \leq \frac{2 \pi}{\sqrt{n}}\left(\sum_{j=1}^{\ceil[\big]{\frac{4 \pi \sqrt{n}}{\ord(g)}}} \frac{2}{(j \ord(g))^{1/4}} I_{1}\left(\frac{4 \pi \sqrt{n}}{j \ord(g)}\right) + \sum_{ \ceil[\big]{ \frac{4 \pi \sqrt{n}}{\ord(g)} }+1}^{\infty} \frac{16 \pi \sqrt{n}}{3 j^{\frac{5}{4}} \ord(g)^{\frac{5}{4}}}\right),
$$
which simplifies to 
\begin{equation}\label{eq:g_upper}
    |c_{g}(1,n)| \leq \frac{4 \pi}{\sqrt{n} \ord(g)^{\frac{1}{4}}} \left(1 + {\frac{4 \pi \sqrt{n}}{\ord(g)}}\right) I_{1}\left(\frac{4 \pi \sqrt{n}}{\ord(g)}\right) + \frac{64 \sqrt{2} \pi^{\frac{7}{4}}}{3 \ord(g) n^{\frac{1}{8}}}.
\end{equation}
Therefore, plugging into \eqref{eq:tri_ineq} gives
\begin{equation} \label{eq:pos_rel}
\begin{split}
    & \frac{2 \pi}{\sqrt{n}} I_{1}(4 \pi \sqrt{n}) - 8 \cdot 2^{\frac{3}{4}} \pi^{2} I_{1}(2 \pi \sqrt{n}) - \frac{64 \sqrt{2} \pi^{\frac{7}{4}}}{3 n^{\frac{1}{8}}}
    \\&- \sum_{\substack{[g] \in  \Conj(G) \\ [g] \neq [e]}} \#[g] \left(\frac{4 \pi}{\sqrt{n} \ord(g)^{\frac{1}{4}}} \left(1 + {\frac{4 \pi \sqrt{n}}{\ord(g)}}\right) I_{1}\left(\frac{4 \pi \sqrt{n}}{\ord(g)}\right) + \frac{64 \sqrt{2} \pi^{\frac{7}{4}}}{3 \ord(g) n^{\frac{1}{8}}}\right) > 0,
\end{split}
\end{equation}
which implies $\mult_i(n)$ positive. Using the result \cite{LN}
$$
    \frac{I_{1}(x)}{I_{1}(y)} < e^{x-y} \frac{y}{x} \hspace{1cm} (x < y)
$$
simplifies \eqref{eq:pos_rel} to
\begin{equation}\label{eq:bound_no_al}
\begin{split}
    & I_{1}(2 \pi \sqrt{n})\left(\frac{\pi e^{2 \pi \sqrt{n}}}{\sqrt{n}} - 8 \cdot 2^{\frac{3}{4}} \pi^2- \sum_{\substack{[g] \in  \Conj(G) \\ [g] \neq [e]}}  \#[g] \frac{4 \pi \ord(g)^{\frac{3}{4}}}{\sqrt{n}} \left(1 + \frac{4 \pi \sqrt{n}}{\ord(g)}\right) e^{4  \pi \sqrt{n} (\frac{1}{\ord(g)} - \frac{1}{2})}\right) 
    \\&- \frac{64 \sqrt{2} \pi^{\frac{7}{4}}}{3 n^{\frac{1}{8}}}\left(1 + \sum_{\substack{[g] \in  \Conj(G) \\ [g] \neq [e]}}  \frac{\#[g]}{\ord(g)}\right) > 0.
\end{split}
\end{equation}
Now, by the power series for $I_1$ we have that $I_{1}(2 \pi \sqrt{n})$ is monotonically increasing in $n$. Examining the coefficient 
$$
    \frac{\pi e^{2 \pi \sqrt{n}}}{\sqrt{n}} - 8 \cdot 2^{\frac{3}{4}} \pi^2- \sum_{\substack{[g] \in \Conj(G) \\ [g] \neq [e]}}  \#[g] \frac{4 \pi \ord(g)^{\frac{3}{4}}}{\sqrt{n}} \left(1 + \frac{4 \pi \sqrt{n}}{\ord(g)}\right) e^{4  \pi \sqrt{n} \left(\frac{1}{\ord(g)} - \frac{1}{2}\right)}
$$
we have that $\frac{\pi e^{2 \pi \sqrt{n}}}{\sqrt{n}}$ is positive and is monotonically increasing for $n \geq 1$, while 
$$
\sum_{\substack{[g] \in  \Conj(G) \\ [g] \neq [e]}}  \#[g] \frac{4 \pi \ord(g)^{\frac{3}{4}}}{\sqrt{n}} \left(1 + \frac{4 \pi \sqrt{n}}{\ord(g)}\right) e^{4  \pi \sqrt{n} \left(\frac{1}{\ord(g)} - \frac{1}{2}\right)}
$$
decreases monotonically, using the fact that $\ord(g) \geq 2$ and so the exponent
$$4  \pi \sqrt{n} \left(\frac{1}{\ord(g)} - \frac{1}{2}\right)$$
is increasingly negative with $n$. Therefore the first of the two summands of \eqref{eq:bound_no_al} is monotonically increasing for $n \geq 1$, so since the second summand negative and monotonically decreasing, we have that if the inequality of \eqref{eq:bound_no_al} is satisfied for some $N \geq 1$ then it is satisfied for all $n \geq N$. Therefore it suffices to compute this $N$ for a given group and then check nonnegativity up to that bound, which we did with Sage \cite{sage}. A computation shows that we can choose $N = 100$ uniformly for all the groups we are considering.
\end{proof}
\noin We note that our requirement that all multiplicities be positive introduced only one additional constraint not already provided by the requirement that all multiplicities be integral; namely, the requirement that $a + b + c + d \leq 13$ for groups of the form $(\Z/2\Z)^a \times (\Z/4\Z)^b \times (\Z/8\Z)^c \times (\Z/16\Z)^d$ (see Table \ref{table:abelian_groups}).

\section{Examples and Discussion}\label{sec:example}

\noin In Sections \ref{ssc:ex_z7} and \ref{ssc:ex_s4}, we go through some examples of groups $G$ that explicitly show the existence of the desired $\C[G]$-modules. In Section \ref{ssc:discussion}, we discuss our results and present further questions.

\subsection{Elementary Abelian 7-group of rank 4}\label{ssc:ex_z7}

\noin Consider $G = (\Z/7\Z)^4$, which is not a subgroup of $\M$ (\cite{wilson_communication}; see also Theorem 7 of \cite{monster_subgroup}). Theorem \ref{thm:abelian_moonshine} says that there is an infinite dimensional, graded $\C[G]$-module where the graded trace functions for $g \in G$ is $T_{\ord(g)}$, the Hauptmodul for $X_0(\ord(g))$.
\\\\
Since $G$ has only elements of order $1$ or $7$, we need only consider the Hauptmoduln
\begin{align*}
T_1(\t) &= J(\t) = q^{-1} + 196884q + O(q^2),
\\T_7(\t) &= \frac{\eta(\t)^4}{\eta(7 \t)^4} + 4 = q^{-1} + 2q + O(q^2).
\end{align*}
Let $\mult_i(n)$ denote the multiplicity of the $i \tth$ irreducible representation, where $1 \leq i \leq 2401$ and $i=1$ denotes the trivial representation. From \eqref{eq:mult_gen_fun}, we know
$$F_i(q) = \sum_{n \geq -1} \mult_i(n)q^n = \frac{1}{7^4} \sum_{g \in G} \overline{\chi_i(g)} T_{\ord(g)}(q).$$
Since $(\Z/7\Z)^4$ is abelian, its character table is easy to compute. It will be the fourth iterated Kronecker product of the character table for $\Z/7\Z$ with itself, and for brevity, the character table for $\Z/7\Z$ is
	\begin{figure}[H]
	\caption{Character Table for $\Z/7\Z$}
	\vspace{2ex}
	\begin{tabular}{c|ccccccc}
		$\Z/7\Z$ &$e$&$1$&$2$&$3$&$4$&$5$&$6$\\
		\hline
		\(\chi_1\) &$ 1 $&$ 1 $&$ 1 $&$ 1 $&$ 1 $&$ 1 $&$ 1$\\
		\(\chi_2\) &$ 1 $&$ \zeta $&$ \zeta^2 $&$ \zeta^3 $&$	\zeta^4 $&$ \zeta^5 $&$ \zeta^6$\\
		\(\chi_3\) &$ 1 $&$ \zeta^2 $&$ \zeta^4 $&$ \zeta^6 $&$ \zeta $&$ \zeta^3 $&$ \zeta^5$\\
		\(\chi_4\) &$ 1 $&$ \zeta^3 $&$ \zeta^6 $&$ \zeta^2 $&$ \zeta^5 $&$ \zeta $&$ \zeta^4$\\
		\(\chi_5\) &$ 1 $&$ \zeta^4 $&$ \zeta $&$ \zeta^5 $&$ \zeta^2 $&$ \zeta^6 $&$ \zeta^3$\\
		\(\chi_6\) &$ 1 $&$ \zeta^5 $&$ \zeta^3 $&$ \zeta $&$ \zeta^6 $&$ \zeta^4 $&$ \zeta^2$\\
		\(\chi_7\) &$ 1 $&$ \zeta^6 $&$ \zeta^5 $&$ \zeta^4 $&$ \zeta^3 $&$ \zeta^2 $&$ \zeta$\\
		
	\end{tabular}
	\end{figure}
\noin where $\zeta := \exp{2 \pi i / 7}$. By the iterated Kronecker product of the above table, or directly from Schur's orthogonality relations, we get
$$ \sum_{\substack{g \in G \\ \ord(g) = 7}} \overline{\chi_i(g)} = \begin{cases}
7^4-1 & i = 1\\
-1 & i \neq 1.
\end{cases}$$
Thus
$$
    F_i(q) = \begin{dcases}
     \frac{T_1(q) +(7^4-1) T_7(q)}{7^4} & i = 1\\
     \frac{T_1(q) - T_7(q)}{7^4} & i \neq 1.\\
    \end{dcases}
$$
Since $T_1$ and $T_7$ have integral coefficients, in order for these multiplicities to be integral for all cases, it suffices to show that 
$$ T_1 \equiv T_7 \pmod{7^4}.$$
Lemma \ref{lem:sturm} says that we only need to the check the first
$$[\text{SL}_2(\Z) : \Gamma_0(7)] - 1 = 7 \prod_{\text{prime }p \mid 7} (1 + p^{-1}) - 1= 7$$
coefficients. We can do this explicitly. Sage \cite{sage} gives us that
\begin{equation*}\label{eq:7_rels}
\begin{split}
\frac{T_1 - T_7}{7^4} &= 82q + 8952q^2 + 359975q^3 + 8432260q^4 + 138776610q^5 
\\& + 1770938484q^6 + 18599331142q^7 + 167218195192q^8 + O(q^9).
\end{split}
\end{equation*}
Now, to show nonnegativity, it suffices to show that the coefficients of $T_1(q) - T_7(q)$ and $T_1(q) + (7^4-1) T_7(q)$ are nonnegative. By the triangle inequality,
$$\mult_i(n) \geq |c_1(1,n)| - (7^4-1) |c_7(1,n)|.$$
We recall that \eqref{eq:j_lower} gives a lower bound on $|c_1(1,n)|$, and equation (\ref{eq:g_upper}) gives an upper bound on $|c_7(1,n)|$:
\begin{align*}
    |c_{1}(1,n)| &= c_{1}(1,n) \geq \frac{2 \pi}{\sqrt{n}} I_{1}(4 \pi \sqrt{n}) - 8\cdot2^{\frac{3}{4}} \pi^{2} I_{1}(2 \pi \sqrt{n}) - \frac{64 \sqrt{2} \pi^{\frac{7}{4}}}{3 n^{\frac{1}{8}}},  
    \\|c_7(1,n)| &\leq \frac{4 \pi}{\sqrt{n} 7^{\frac{1}{4}}} \left(1 + {\frac{4 \pi \sqrt{n}}{7}}\right) I_{1}\left(\frac{4 \pi \sqrt{n}}{7}\right) + \frac{64 \sqrt{2} \pi^{\frac{7}{4}}}{21 n^{\frac{1}{8}}}.
\end{align*}
Putting these together and using bounds on the first Bessel function, we get

\begin{equation*}
\begin{split}
    \mult_i(n) &\geq I_{1}(2 \pi \sqrt{n})\left(\frac{\pi e^{2 \pi \sqrt{n}}}{\sqrt{n}} - 8\cdot 2^{\frac{3}{4}} \pi^2 - (7^4-1) \frac{4 \pi 7^{\frac{3}{4}}}{\sqrt{n}} \left(1 + \frac{4 \pi \sqrt{n}}{7}\right) e^{4  \pi \sqrt{n} (\frac{1}{7} - \frac{1}{2})}\right)
    \\&- \frac{64 \sqrt{2} \pi^{\frac{7}{4}}}{3 n^{\frac{1}{8}}}\left(1 + \frac{7^4-1}{7}\right),
\end{split}
\end{equation*}
where the right hand side is monotonically increasing. For $n = 2$, the right hand side is positive, and a quick calculation shows that $\mult_i(n) \geq 0$ for the first two coefficients, and therefore for all $1 \leq i \leq 7^4$ and $n \geq -1$.
\\\\
To see that this representation converges to the regular representation as $n \rightarrow \infty$, we can look at the proportion of the multiplicity of trivial and nontrivial representations, which should be uniform since all irreducible representations of an abelian group have dimension 1. Define
$$\delta(\mult_i(n)) := \frac{\mult_i(n)}{\sum_{j = 1}^{2401}\mult_j(n)}$$
and note that $\mult_i(n) = \mult_j(n)$ when $i,j > 1$. We illustrate these asymptotics explicitly:
\begin{figure}[H]
\caption{Proportions of Irreducible Representations in $(\Z/7\Z)^4$ moonshine module}
\vspace{2ex}
\begin{tabular}{|c|c|c|c|c|}
\hline
$n$ & $\delta(\mult_1(n))$ & $\delta(\mult_{2}(n))$ & $\hdots$ & $\delta(\mult_{2401}(n))$\\
\hline
$-1$ & $1$ & $0$ & $\hdots$ & $0$\\

$0$ & -- & -- & $\hdots$ & --\\

$1$ & $4.2664... \times 10^{-4}$ & $4.1648... \times 10^{-4}$ & $\hdots$ & $4.1648... \times 10^{-4}$\\

$2$ & $4.1686... \times 10^{-4}$ & $4.1649... \times 10^{-4}$ & $\hdots$ & $4.1649... \times 10^{-4}$\\

$3$ & $4.1649... \times 10^{-4}$ & $4.1649... \times 10^{-4}$ & $\hdots$ & $4.1649... \times 10^{-4}$\\

\vdots & \vdots & \vdots &  & \vdots\\

$\infty$ & $1/2401$ & $1/2401$ & $\hdots$ & $1/2401$ \\
\hline
\end{tabular}
\end{figure}
\noin Also, since we have chosen the graded trace functions to be $J$ and the Hauptmodul for $X_0(7)$, these functions are replicable (see Section \ref{ssc:replicability} for more details).

\subsection{Symmetric group on 4 letters}\label{ssc:ex_s4}

To demonstrate our proof for Theorem $\ref{thm:moonshine_always}$, we will construct a moonshine module for the nonabelian group $G = S_4$.
\\\\
Before proceeding, we note that $1 \leq \ord(g) \leq 4$ for all $g \in G$. One can construct a moonshine module that assigns the graded trace functions to be Hauptmoduln for $X_0(\ord(g))$. That is, \emph{$S_4$ has depth one}. Using an argument similar to the previous example, one can check the desired congruences up to the Sturm bound, which can be uniformly chosen to be $23$ since all modular functions can be viewed on level $12$. A bounding argument like the one above shows that the necessary coefficients of the required relations are nonnegative.
\\\\
Also, with regards to the construction in Remark \ref{rmk:tensor_power_of_V_natural}, a quick computation shows that the graded trace assignments
\begin{align*}
    [e] &\mapsto J(\t)^4
    \\ [(12)] &\mapsto J(2\t) J(\t)^2
    \\ [(12)(34)] &\mapsto J(2\t)^2
    \\ [(123)] &\mapsto J(3\t) J(\t)
    \\ [(1234)] &\mapsto J(4\t)
\end{align*}
also give a valid $\C[S_4]$-module.
\\\\
We now demonstrate our method of proof for Theorem $\ref{thm:moonshine_always}$. In order for the asymptotics for our (non-identity) graded trace functions to be equal, we choose the smallest $h \in \N$ such that
$$t_{\ord(g)} = \frac{h \ord(g)}{\ord(g)-1} \in \N$$
for all $g \in G$, $\ord(g) > 1$. Thus we choose $h = \lcm \{ \ord(g) - 1: g \neq e\} = 6$. Since we need $R_1(\t) = J(\t) \mid dT(d)$ to dominate the functions
$$\bar{B}_{m,{t_m}}(\t) = m^{12 t_m}  \left(\frac{\Delta(m\t)}{\Delta(\t)}\right)^{t_m}$$
asymptotically, we initially choose $d = 7 > 6$, that is
$$R_1(\t) = J(\t) \mid dT(d) = q^{-7} + 44656994071935q + O(q^2).$$
We now define the graded trace functions
$$R_m(\t) = R_1(\t) - 24 \bar{B}_{m,t_m}(\t).$$
Lemma \ref{lem:pre_moon} says that these graded trace functions will satisfy the congruences required for the multiplicities of representations to be integral, no matter our choice of $R_1(\t)$ and $\bar{B}_{m,t_m}(\t)$, as long as they are Laurent series with coefficients in $\Z$ (which these are). We can look at the character table for $S_4$ to verify this:

\begin{figure}[H]
\caption{Character Table for $S_4$}
\vspace{2ex}
\begin{tabular}{c|ccccc}
        & $(1)$ & $(6)$ & $(3)$ & $(8)$ & $(6)$\\
		$S_4$ &$e$&$(12)$&$(12)(34)$&$(123)$&$(1234)$\\
		\hline
		\(\chi_1\) &$ 1 $&$ 1 $&$ 1 $&$ 1 $&$ 1$\\
		\(\chi_2\) &$ 1 $&$ -1 $&$ 1 $&$ 1 $&$ -1 $ \\
		\(\chi_3\) &$ 2 $&$ 0 $&$ 2 $&$ -1 $&$ 0 $ \\
		\(\chi_4\) &$ 3 $&$ 1 $&$ -1 $&$ 0 $&$ -1$ \\
		\(\chi_5\) &$ 3 $&$ -1 $&$ -1 $&$ 0 $&$ 1$ \\
		
	\end{tabular}
	\end{figure}
\noin In fact, all characters are integers here so the coefficients of
\begin{align*}
F_i(q) &= \frac{1}{24} \sum_{g \in G} \overline{\chi_i(g)} R_g(\t)
\end{align*}
will be integral by construction of our $R_m(\t)$. For nonnegativity, we have to check that five $q$-series will be positive, each corresponding to different irreducible characters of $S_4$. Denoting $\bar{B}_{m,t_m}$ by $\bar{B}_m$ for brevity, the irreducible representation multiplicity generating functions will be
\begin{align*}
F_i(q) &= \frac{1}{24} \begin{cases}
R_1 + 9R_2 + 8R_3+ 6R_4 & i = 1\\
R_1 -3R_2 + 8R_3 - 6R_4  & i = 2\\
2R_1 + 6R_2 - 8R_3  & i = 3\\
3R_1 + 3R_2 - 6R_4  & i = 4\\
3R_1 - 9R_2 + 6R_4  & i = 5\\
\end{cases}
\\&= \begin{cases}
R_1 - 9\bar{B}_2 - 8\bar{B}_3 - 6\bar{B}_4 & i = 1\\
3\bar{B}_2 - 8\bar{B}_3 + 6\bar{B}_4  & i = 2\\
-6\bar{B}_2 + 8\bar{B}_3  & i = 3\\
- 3\bar{B}_2 + 6\bar{B}_4  & i = 4\\
 9\bar{B}_2 - 6\bar{B}_4  & i = 5.\\
\end{cases}
\end{align*}
As we can see, these are concretely integral. In terms of nonnegativity, for $i \neq 1$,
$$\mult_i(n) \sim \dim(\chi_i) \frac{6^{1/4}}{\sqrt{2} n^{3/4}} \exp{4\pi \sqrt{6 n}}$$
(see \eqref{eq:mult_asymp} for details), which is asymptotically positive. By \eqref{eq:j_asym} for $i = 1$,
\begin{align*}
\mult_1(n) &\sim \frac{d^{1/4}}{\sqrt{2}n^{3/4}} \exp{4\pi \sqrt{dn}} -  23\frac{6^{1/4}}{\sqrt{2} n^{3/4}} \exp{4\pi \sqrt{6 n}}
\\&\sim \frac{d^{1/4}}{\sqrt{2}n^{3/4}} \exp{4\pi \sqrt{dn}}
\end{align*}
since we have chosen $d = 7 > 6$. Thus, we can choose some $N \in \N$ for which $\mult_i(n) \geq 0$ for all $n > N$ and all $i \in \{1,2,3,4,5\}$. In order to make $\mult_i(n) \geq 0$ for all $n \in \N$, we can hit all graded trace functions with the $113 \tth $ normalized Hecke operator. We can illustrate the proportions of the first few irreducible representations explicitly:

\begin{figure}[H]
\caption{Proportions of Irreducible Representations in asymptotically trivial $S_4$ module}
\vspace{2ex}
\begin{tabular}{|c|c|c|c|c|c|}
\hline
$n$ & $\delta(\mult_1(n))$ & $\delta(\mult_{2}(n))$ & $\delta(\mult_{3}(n))$ & $\delta(\mult_{4}(n))$ & $\delta(\mult_{5}(n))$\\
\hline
$-791$ & $1$ & $0$ & $0$ & $0$ & $0$\\

$1$ & $9.99... \times 10^{-1}$ & $3.38... \times 10^{-15}$ & $6.42... \times 10^{-14}$ & $6.75... \times 10^{-14}$ & $1.42... \times 10^{-13}$\\

$2$ & $9.99... \times 10^{-1}$ & $6.30... \times 10^{-19}$ & $1.34... \times 10^{-18}$ & $1.97... \times 10^{-18}$ & $2.06... \times 10^{-18}$\\

\vdots & \vdots & \vdots & \vdots & \vdots & \vdots\\

$100$ & $9.99... \times 10^{-1}$ & $1.17... \times 10^{-116}$ & $2.35... \times 10^{-116}$ & $3.53... \times 10^{-116}$ & $3.53... \times 10^{-116}$\\

\vdots & \vdots & \vdots & \vdots & \vdots & \vdots\\

$\infty$ & $1$ & $0$ & $0$ & $0$ & $0$\\
\hline
\end{tabular}
\end{figure}
\noin Addressing Remark \ref{rmk:repr_dists}, there is a simple fix we can do to make this module asymptotically regular. Instead of taking
$$R_1 = (J \mid 7T(7)) \mid 113T(113) = J \mid 791T(791),$$
we can define
$$R_e = 24 \cdot (J \mid pT(p)) + J \mid 791T(791),$$
for a prime $p > 791$. In particular, we can choose $p=797$. Now, the left hand term of $R_e$ will asymptotically dominate all other terms, making the coefficient of $R_e$, which is $\dim(\chi_i)$, the proportion of $i\tth$ irreducible representations as $n \to \infty$. We can now illustrate the proportions of irreducible representations explicitly:

\begin{figure}[H]
\caption{Proportions of Irreducible Representations in asymptotically regular $S_4$ module}
\vspace{2ex}
\begin{tabular}{|c|c|c|c|c|c|}
\hline
$n$ & $\delta(\mult_1(n))$ & $\delta(\mult_{2}(n))$ & $\delta(\mult_{3}(n))$ & $\delta(\mult_{4}(n))$ & $\delta(\mult_{5}(n))$\\
\hline
$-797$ & $1/10$ & $1/10$ & $1/5$ & $3/10$ & $3/10$\\
$-791$ & $1$ & $0$ & $0$ & $0$ & $0$\\

$1$ & $1.22... \times 10^{-1}$ & $9.74... \times 10^{-2}$ & $1.94... \times 10^{-1}$ & $2.92... \times 10^{-1}$ & $2.92... \times 10^{-1}$\\

$2$ & $1.13... \times 10^{-1}$ & $9.85... \times 10^{-2}$ & $1.97... \times 10^{-1}$ & $2.95... \times 10^{-1}$ & $2.95... \times 10^{-1}$\\

\vdots & \vdots & \vdots & \vdots & \vdots & \vdots\\

$100$ & $1.00... \times 10^{-1}$ & $9.99... \times 10^{-2}$ & $1.99... \times 10^{-1}$ & $2.99... \times 10^{-1}$ & $2.99... \times 10^{-1}$\\

\vdots & \vdots & \vdots & \vdots & \vdots & \vdots\\

$\infty$ & $1/10$ & $1/10$ & $1/5$ & $3/10$ & $3/10$\\
\hline
\end{tabular}
\end{figure}
\subsection{Discussion}\label{ssc:discussion}

The notion of depth defined in this paper \eqref{def:depth} is essentially bounding the order of pole at $i\infty$ for $\Tr(e | V^G)$. It is natural to ask more generally about the full polar divisors of the graded trace functions. In particular, for a given $m$, we considered graded trace functions of the form
$$ R'_{m,d} := J(\t) \mid dT(d) - \ordG{G} \cdot m^{12t_m} \left(\frac{\Delta(m \t)}{\Delta(\t)}\right)^{t_m}.$$
It is easy to see that $J(\t) \mid dT(d)$ will have a pole of order $d$ at each cusp. For the right-hand-side term, Theorem 1.65 of \cite{webofmodularity} gives
$$ \ord\left(\frac{\Delta(m \t)}{\Delta(\t)} ; \frac{a}{b}\right) = \frac{b^2-m}{b \gcd(b, \frac{m}{b})}$$
where $a,b \in \N$, $(a,b) = 1$, and $b | m$. Therefore, the order of the pole of $R'_{m,d}$ at a cusp $a/b$ is given by
$$-\ord\left(R'_{m,d} ; \frac{a}{b}\right) = \max\left(t_m\frac{m-b^2}{b \gcd(b, \frac{m}{b})},d\right)$$
and in particular, these modular functions will have a pole of order greater than $d$ at $a/b$ if and only if $b^2 < m$. If we write
$$(R'_{m,d}) = (R'_{m,d})_0 - (R'_{m,d})_\infty,$$
where the divisors $(R'_{m,d})_0, (R'_{m,d})_\infty$ are nonnegative, then we have
$$ (R'_{m,d})_\infty = \sum_{\substack{b | m \\ (a,b) = 1 \\ a \Mod{(b,\frac{m}{b})} }} \max\left(t_m\frac{m-b^2}{b \gcd(b, \frac{m}{b})},d\right) \cdot \frac{a}{b},$$
where $a,b \in \N$ and $\dfrac{a}{b}$ denotes a cusp of $X_0(m)$. Note that this description of cusps comes from Proposition 2.6 of \cite{iwaniec}. One can derive a similar formula for the polar parts of divisors to the functions we use to prove Theorem \ref{thm:moonshine_always}, where we apply a Hecke operator to all graded trace functions.
\\\\
In this light, one way in which monstrous moonshine is so special is that \emph{all} of its graded trace functions have a simple pole at $i \infty$ and are holomorphic at \emph{all other cusps}. The existence of these modular functions in itself is rare since there are finitely many subgroups of $\SL_2(\R)$ commensurable with $\SL_2(\Z)$ that give rise to genus zero modular curves; the fact that these Hauptmoduln are naturally related to conjugacy classes of $\M$ is, in this view, astonishing.
\\\\
\noin We pose a few natural questions based on our results:

\begin{que}
        Theorem \ref{thm:order_bounds} gives restrictions on the finite groups with moonshine modules for which graded traces are Hauptmoduln of congruence subgroups. Theorem \ref{thm:abelian_moonshine} actually classifies the finite abelian groups with this property. What more explicit descriptions can be given if we consider all finite groups?
\end{que}

\begin{que}
    Frenkel-Lepowsky-Meurman \cite{FLM1, FLM2} construct a vertex operator algebra for the monster module. Theorem \ref{thm:abelian_moonshine} gives that $(\Z / 5\Z)^5$ and $(\Z / 7\Z)^4$, which are not subgroups of the Monster, have moonshine with Hauptmoduln as graded trace functions. The argument in Remark \ref{rmk:tensor_power_of_V_natural} gives one way of constructing a vertex operator algebra with an action of these groups, but is there a vertex operator algebra structure on the modules constructed in Theorem $\ref{thm:abelian_moonshine}$? Perhaps, since $(\Z/5\Z)^5$ is a subgroup of the Weyl group $W(A_4^6)$ and $(\Z / 7\Z)^4$ is a subgroup of $W(A_6^4)$ there will be a vertex operator algebra with an action of these Weyl groups. What about other $W(N)$ for $N$ a Niemeier lattice? In fact, the 71 vertex operator algebras with graded dimension $J(\t) + k$ for some constant $k$ are known (see \cite{hol_voa, hol_voa_24} and references cited therein). It is natural to consider whether these groups are represented in these 71 examples, where these graded traces appear except with different constant terms.
\end{que}

\begin{que}\label{qu:atkin_lehner}
    Theorems \ref{thm:order_bounds} and \ref{thm:abelian_moonshine} describe groups which have Hauptmoduln of congruence subgroups as graded traces. What if we allow Atkin-Lehner involutions so that we can consider more modular curves with genus zero, as in the case of monstrous moonshine? We note that the set of 171 functions of Hauptmoduln which show up as graded trace functions on the monster module has rank 163 \cite{Conway-Norton}, which leads to difficulties in generalizing the proof of Theorem \ref{thm:order_bounds}. We expect that a refinement of these methods and the requirement of nonnegativity of multiplicities can be used to bound the order of possible groups, and hope that bounds achieved in this way reveal $\M$ to be maximal in an appropriate sense.

\end{que}

\begin{que}
  Theorem \ref{thm:moonshine_always} tells us that the depth of a group is always finite. What are the groups with depth $1$ moonshine? When can the graded trace functions be chosen to be replicable, as in monstrous moonshine?
\end{que}

\begin{que}
This paper tells us that we can construct $\C[G]$-modules with asymptotically trivial or asymptotically regular representations, which are in some sense the two extremal distributions we would expect. Which other distributions of irreducible representations can be realized as $\C[G]$-modules?
\end{que}

\begin{que}
The existence of moonshine modules hinges on the existence of congruences between modular forms. Modulo $p$, essentially the only such congruence is $E_{p-1} \equiv 1\pmod{p}$. But $E_{p-1}$ is essentially congruent to the supersingular polynomial for $p$, which as Ogg observed (and famously offered a bottle of Jack Daniels to anyone who could explain) \cite{ogg}, splits over $\F_p$ if and only if $p \big| \ordG{\M}$. Could this congruence be used to provide an answer to the Jack Daniels problem which does not employ the genus zero property of the normalizer of $\Gamma_0(p)$ in $\SL_2(\Z)$ for these primes?
\end{que}

\newpage

\section*{Appendix A. Hauptmoduln Congruences}
\noin Here we list the maximal moduli of congruences obtained to prove Theorem \ref{thm:abelian_moonshine}. Note that here, $T_N$ denotes the normalized Hauptmodul for $X_0(N)$.

\begin{align*}
0 &\equiv T_1 - T_2 & \pmod{2^{16}}
\\&\equiv T_1 - T_4 & \pmod{2^8}
\\&\equiv T_1 - T_8 & \pmod{2^4}
\\&\equiv T_1 - T_{16} & \pmod{2^2}
\\&\equiv T_1 - T_3 & \pmod{3^9}
\\&\equiv T_1 - T_9 & \pmod{3^3}
\\&\equiv T_1 - T_5 & \pmod{5^5}
\\&\equiv T_1 - T_{25} & \pmod{5^1}
\\&\equiv T_1 - T_7 & \pmod{7^4}
\\&\equiv T_1 - T_{13} & \pmod{13^2}
\\&\equiv T_1 - T_2 - T_3 + T_6 & \pmod{2^4 3^3}
\\&\equiv T_1 - T_4 - T_3 + T_{12} & \pmod{2^2 3^2}
\\&\equiv T_1 - T_2 - T_9 + T_{18} & \pmod{2^2 3^1}
\\&\equiv T_1 - T_2 - T_5 + T_{10} & \pmod{2^3 5^2}
\end{align*}

\newpage

\end{document}